\theoremstyle{plain}% Theorem-like structures provided by amsthm.sty
\newtheorem{theorem}{Theorem}
\newtheorem{lemma}{Lemma}
\newtheorem{assump}{ }
\theoremstyle{definition}
\newtheorem{remark}{Remark}
\title{Functional relative error regression under left truncation and right censoring}
\date{March 8, 2023}
\newif\ifuniqueAffiliation
\author{\hspace{1mm}A.~Boucetta\\
	Faculty of Mathematics\\
	USTHB\\
	Algiers, Algeria \\
	\texttt{aboucetta@usthb.dz} \\
	%% examples of more authors
	\And
	 Z.~Guessoum \\
	 Faculty of Mathematics \\
	 USTHB\\
	 Algiers, Algeria \\
	\texttt{zguessoum@usthb.dz} \\
	\AND
	\hspace{1mm}E.~Ould-Said \\
	Laboratoire de Math. Pures et Appliquées\\
	Univ. du Littoral Côte d'Opale\\
	Calais, France \\
	\texttt{elias.ould-said@univ-littoral.fr} \\
	%% \And
	%% Coauthor \\
	%% Affiliation \\
	%% Address \\
	%% \texttt{email} \\
	%% \And
	%% Coauthor \\
	%% Affiliation \\
	%% Address \\
	%% \texttt{email} \\
}
\newbox{\orcid}\sbox{\orcid}{\includegraphics[scale=0.06]{orcid.pdf}} 
\author[1]{%
	\href{https://orcid.org/0000-0000-0000-0000}{\usebox{\orcid}\hspace{1mm}David S.~Hippocampus\thanks{\texttt{hippo@cs.cranberry-lemon.edu}}}%
}
\author[1,2]{%
	\href{https://orcid.org/0000-0000-0000-0000}{\usebox{\orcid}\hspace{1mm}Elias D.~Striatum\thanks{\texttt{stariate@ee.mount-sheikh.edu}}}%
}
\affil[1]{Department of Computer Science, Cranberry-Lemon University, Pittsburgh, PA 15213}
\affil[2]{Department of Electrical Engineering, Mount-Sheikh University, Santa Narimana, Levand}
\begin{document}
\maketitle

\begin{abstract}
The nonparametric estimators built by minimizing the mean squared relative error are gaining in popularity for their robustness in the presence of outliers in comparison to the Nadaraya Watson estimators.
In this paper we build a relative error regression function estimator in the case of a functional explanatory variable and a left truncated and right censored scalar variable. 
The pointwise and uniform convergence of the estimator is proved and its performance is assessed by a numerical study in particularly the robustness which is highlighted using the influence function as a measure of robustness.
\end{abstract}

% keywords can be removed
\keywords{Censored data \and Functional data \and Functional regression \and Relative error \and Truncated data.}

\section{Introduction}\label{sec1}
%%%%%%%%%%%%%%%%%%%%%%%%%%%%%%%%%%%%%%%%%%%%%%%%%%%%%%%%
Functional data is a generalization of the multivariate data concept (vectors) to infinite dimension. This type of data is obtained when an observed phenomenon varies over a continuum set which is often the case in fields such as physics, chemistry (spectrometric analysis), health (ECG data for example), \ldots \\
To exploit the richness of information brought by the infinite dimension structure of the data, a set of statistical tools is needed. Functional data analysis (FDA) is a statistical field that deals with the treatment and analysis of this particular type of data, it has been popularized mostly by the monographs of \cite{ramsay2005functional} and \cite{ferraty2006nonparametric}. The work of \cite{ramsay2005functional} is a good introduction to the practical aspects of FDA and to parametric functional models while the work of \cite{ferraty2006nonparametric} focuses on the nonparametric methods applied to functional data and is considered as a major reference on the topic of nonparametric functional regression which is the approach on which this work is based. For a recent account of the theory of FDA we refer the interested reader to \cite{hsing_eubank_2015}.\\
In a scalar on functional regression framework the task is to model the link between a  real random variable denoted by  $Y$ and a functional explanatory variable $\boldsymbol{\chi}$ taking its  values in  $(\mathcal{F},d)$, where $\mathcal{F}$ is a semi metric space and  $d$ its corresponding semi metric, such that $Y=r(\boldsymbol{\chi})+\epsilon ,$
where $r(\cdot)$ is the regression operator mapping $\mathcal{F}$ to $\mathbb{R}$ and $\epsilon$ an error random variable. It is worth noting that in some practical cases, the variable of interest $Y$, can not be fully observed. When we deal with time-to-event data as in survival analysis, the lifetime may be left truncated and/or right censored. Right censoring occurs when for some individuals, the event of interest is observed only if it happens prior to some specified times and these times vary from an individual to another. We have left truncation when only the individuals who experience the event of interest after a certain delayed entry time are included in the study \citep{klein2003survival}.
Usually $r$ is estimated by minimizing the least squares criterion. The estimations obtained then are sensitive to outliers since it assumes that all the variables have an equal weight, which makes them inadequate in some situations with large outliers. To overcome this limitation we shall use the following mean quadratic relative error as a loss function:
\begin{small}
\begin{equation}
\label{eqn:une}
\mathbb{E}\left[\left(\frac{Y-r(\boldsymbol{\chi} )}{Y}\right)^{2}\mid \boldsymbol{\chi} \right]\text{, for } Y>0.
\end{equation}
\end{small}
\cite{park1998relative} have shown that the minimizer of (\ref{eqn:une}) is given by:
\begin{small}
\begin{equation}
\label{eqn:sol}
r(\chi)=\frac{\mathbb{E}\left[Y^{-1}\mid \boldsymbol{\chi}=\chi\right]}{\mathbb{E}\left[Y^{-2}\mid \boldsymbol{\chi} =\chi \right]}=:\frac{r_{1}(\chi)}{r_{2}(\chi)},
\end{equation}
\end{small}
where $r_{1}(\chi )$ and $r_{2}(\chi )$ are assumed finite almost surely. The first nonparametric estimator of equation (\ref{eqn:sol}) was proposed by \cite{jones2008relative} in the case when the explanatory variable is a real variable while in the functional case we refer to \cite{demongeot2016relative}. Research that treats nonparametric functional data analysis (or NPFDA) and the relative error regression estimator with incomplete data includes but not limited to \cite{altendji2018functional}, \cite{mechab2019nonparametric} and \cite{fetitah2020strong}.\\
The aim of this paper is to bring together the areas of NPFDA, robust estimation and survival analysis by establishing the pointwise and the uniform convergence with rates of the constructed estimator that takes into account both right-censoring and left-truncation of the variable of interest and the functional character of the covariable and highlighting its asymptotic properties and robustness.\\
The rest of this paper is organized as follows. In Section~\ref{sec2} we introduce the LTRC model and its components. Section~\ref{sec3} discusses the construction of the proposed estimator. In Section~\ref{sec4} and Section~\ref{sec5} the main results and their assumptions are stated while the proofs are relegated to Section~\ref{sec7}. Section~\ref{sec6} is devoted to a deep simulation study of the performance and robustness of the relative error regression estimator using the influence function.
\section{LTRC model}\label{sec2}
%%%%%%%%%%%%%%%%%%%%%%%%%%%%%%%%%%%%%%%%%%%%%%%%%%%%%

Let $(\boldsymbol{\chi}_{i},Y_{i})_{i=1,\cdots,N}$ be an $N$ independent identically distributed (iid) sample of the pair $(  \boldsymbol{\chi} ,Y )$ with values in ${\cal F}\times \mathbb{R}_+^*$ where the sample size $N$ is deterministic but unknown and $Y$ represents a lifetime variable with continuous distribution function $F$.\\
In an LTRC model the variable of interest is subject  to a left truncation and a right censoring. The right censoring is induced by a random variable $S$ supposed strictly positive with continuous distribution function denoted by $G$, that is, we observe $(\boldsymbol{\chi}_{i},Z_{i},\delta_{i})$ where $Z_{i}=\min \lbrace Y_{i}, S_{i} \rbrace$ and $\delta_{i}=\mathds{1}_{\lbrace Y_{i}\leq S_{i}\rbrace}$ the indicator of non censoring, meanwhile, the left truncation is due to the random variable $T$, that is, we observe $Z_{i}$ only if $Z_{i}\geq T_{i}$. We denote by $L$ the distribution function of $T$ and by $\alpha=\mathbb{P}(Z_{i}\geq T_{i})$ the probability of absence of truncation which is assumed strictly positive (if not then nothing is observed). So ultimately one observes the quadruple $(\boldsymbol{\chi}_{i},Z_{i},T_{i},\delta_{i})$ $i=1,...,n$ with $n\leq N$ if there is no confusion in regard to the index $i$, note that conditionally on the value of $n$ the observed data are still iid.\\
We assume that the random variables $Z,T,S$ are mutually independent and it follows that the distribution function of $Z$ is given by $H=1-(1-F)(1-G)$. Let $\mathbb{P}$ denote the probability measure associated to the original $N$ sample and $\mathbf{P}$ the probability measure related to the sample observed conditionally on the event $\lbrace Z_{i}\geq T_{i}\rbrace$, also we denote by $\mathbb{E}$ and $\mathbf{E}$ their respective associated expectation operators. \\
Let $C(y)=\mathbf{P}(T\leq y \leq Z)=\mathbf{L}(y)-\mathbf{H}(y)$, where $\mathbf{L}(y)$, $\mathbf{H}(y)$ and $C(y)$ can be estimated empirically by
\begin{small}
$$\mathbf{L}_{n}(y)=\frac{1}{n}\sum_{i=1}^{n}\mathds{1}_{\lbrace T_{i}\leq y\rbrace}\; \;\text{, }\; \mathbf{H}_{n}(y)=\frac{1}{n}\sum_{i=1}^{n}\mathds{1}_{\lbrace Z_{i}\leq y\rbrace}\;\text{and }\; C_{n}(y)=\frac{1}{n}\sum_{i=1}^{n}\mathds{1}_{\lbrace T_{i}\leq y \leq Z_{i}\rbrace}.$$
\end{small} 
In a LTRC model the distribution function $F$ is estimated by the product limit estimator given in \cite{tsai1987note} called the TJW estimator defined as $F_{n}(y)=1-\prod_{Z_{i}\leq y}\left(1-\frac{1}{nC_{n}(Z_{i})}\right)^{\delta_{i}}.$
Note that $C(y)$ can be rewritten as
$
C(y)=\mathbb{P}(T\leq y \leq Z\mid Z\geq T)=\alpha^{-1}L(y)\bar{H}(y)=\alpha^{-1}L(y)\bar{F}(y)\bar{G}(y),
$
which implies
\begin{equation}
\label{eqn:alpha}
\alpha=\frac{L(y)\bar{F}(y)\bar{G}(y)}{C(y)}. 
\end{equation}
\cite{he1998estimation} showed that a consistent estimator of $\alpha$ is given by
\begin{equation}
\label{eqn:alphan}
\alpha_{n}=\dfrac{L_{n}(y)\bar{F}_{n}(y)\bar{G}_{n}(y)}{C_{n}(y)}\text{ , for any $y$ such that }C_{n}(y)\neq0 ,
\end{equation}
where $L_{n}(y)$ is the \cite{lynden1971method} product limit estimator of the distribution function L defined as $L_{n}(y)=\prod_{T_{i}>y}\left(1-\frac{1}{nC_{n}(T_{i})}\right)$ and $G_{n}(y)$ a TJW-type estimator of the distribution function $G$ defined by $G_{n}(y)=1-\prod_{Z_{i}\leq y}\left(1-\dfrac{1}{nC_{n}(Z_{i})}\right)^{1-\delta_{i}}.$
\begin{remark}
\cite{he1998estimation} showed that the value of $\alpha_{n}$ does not depend on the value of $y$.
\end{remark}
\noindent For any distribution function W denote by $a_{W}=\sup\lbrace y:W(y)>0\rbrace$ and $b_{K}=\inf\lbrace y :W( y ) <1\rbrace$ its support endpoints. Following \cite{gijbels1993strong} the distribution function $F$ is identifiable, in a LTRC setting, only under some conditions on the support of $L$ and $H$ ($a_{L}\leq a_{H}$ and $b_{L}\leq b_{H}$ ). We assume that $a_{L} < a_{H}$, $b_{L}\leq b_{H}$ and that for some $b>0$,  $b<b_{H}$. Furthermore we suppose that $(T,S)$ and $(\boldsymbol{\chi}, \; Y)$ are independent.

\section{Construction of the estimator}\label{sec3}
%%%%%%%%%%%%%%%%%%%%%%%%%%%%%%%%%%%%%%%%%%%%%%%%%%%%%
Under a LTRC setting, one has to account for both left truncation and right censoring effect and by combining the ideas of \cite{altendji2018functional} and \cite{mechab2019nonparametric}, we propose the following mean relative error regression estimator:
\begin{small}
\begin{equation}
\label{eqn:estim rer}
\displaystyle \widehat{r}_{n}(\chi )= \displaystyle\dfrac{ \displaystyle \sum\limits_{i=1}^{n}\dfrac{ \displaystyle \delta_{i}K( \displaystyle\frac{d(\chi , \boldsymbol{\chi}_{i})}{h})Z_{i}^{-1}}{L_{n}(Z_{i})\bar{G_{n}}(Z_{i})}}{ \displaystyle \sum\limits_{i=1}^{n}\dfrac{\displaystyle  \delta_{i}K \displaystyle (\frac{d(\chi , \boldsymbol{\chi}_{i})}{h})Z_{i}^{-2}}{L_{n}(Z_{i})\bar{G_{n}}(Z_{i})}},%=\displaystyle \dfrac{\displaystyle \hat{r}_{1,n}(\chi)}{\hat{r}_{2,n}(\chi)}
\end{equation}
\end{small}
where $K$ is a kernel and $h\coloneqq h_{n} $ is a sequence of positive real numbers. For technical reasons, we rewrite~\eqref{eqn:estim rer}  as
\begin{small}
$$\displaystyle \widehat{r}_{n}(\chi )=\displaystyle \dfrac{\displaystyle \hat{r}_{1,n}(\chi)}{\hat{r}_{2,n}(\chi)},$$
\end{small}
where \begin{small}
 $\displaystyle\displaystyle \widehat{r}_{\ell,n}(\chi) \coloneqq \dfrac{1}{n \mathbb{E} \Big[K  \displaystyle\Big(\frac{d(\chi , \boldsymbol{\chi}_{i})}{h}\Big)\Big]}\displaystyle \sum\limits_{i=1}^{n} \dfrac{  \displaystyle \alpha_{n} \delta_{i}K \big( \displaystyle\frac{d(\chi , \boldsymbol{\chi}_{i})}{h}\big)Z_{i}^{-\ell}}{L_{n}(Z_{i})\bar{G_{n}}(Z_{i})} 
$
\end{small} for $\ell=1,\; 2.$ 
\section{Pointwise almost sure convergence}\label{sec4}
%%%%%%%%%%%%%%%%%%%%%%%%%%%%%%%%%%%%%%%%%%%%%%%%%
Let $\chi $ denote an element in $\mathcal{F}$ and $\mathcal{N}_{\chi}$ its neighborhood, for $\rho>0$ we denote by $B(\chi ,\rho)$ the closed ball with radius $\rho$ and $\chi$ its center that is $B(\chi,\rho)=\lbrace \chi^{\prime}:d(\chi,\chi^{\prime})\leq \rho\rbrace$. For easy writing we shall denote by $C$, $C^{\prime}$ $c$,... any positive constant whose value can change from time to time, in  absence of confusion.\\
In what follows, we suppose that the next assumptions hold true:
\begin{assump}
\label{Hyp:A1}
$\forall h>0$,  $\phi_{\chi}(h)=\mathbb{P}\big( \boldsymbol{\chi}  \in B(\chi,h) \big) >0$ and $\lim\limits_{h\rightarrow0} \phi_{\chi}(h)=0$.
\end{assump}
\begin{assump}
\label{Hyp:A2}
$\forall (\chi_{1}, \chi_{2})\in \mathcal{N}_{\chi}\times \mathcal{N}_{\chi} $ and  for some $k_{\ell}>0$ we have $\mid r_{\ell}(\chi_1)- r_{\ell}(\chi_{2})\mid\leq Cd^{k_{\ell}}(\chi_1,\chi_2)\text{, }\ \ \ell=1,2.  $
\end{assump}
\begin{assump}
\label{Hyp:A3}
The kernel K is a measurable function with support $\left]0,1\right[$ and satisfies:
$$0<C\mathds{1}_{\left[0,1\right]}(.)\leq K(.)\leq C^{\prime}\mathds{1}_{\left[0,1\right]}(.) <\infty .$$
\end{assump}
\begin{assump}
\label{Hyp:A4}
The bandwidth $h$ is such that $\lim_{n\longrightarrow+\infty}h=0\text{ and } \lim_{n\longrightarrow+\infty}\dfrac{\log(n)}{n\phi_{\chi}(h)}=0.$
\end{assump}
\begin{assump}
\label{Hyp:A5}
For any $m\geq2$,  the conditional moments  of  $Y^{-m}$ satisfy $\mathbb{E}\left[ Y^{-m}\mid \boldsymbol{\chi}=\chi \right] < C < \infty.$
\end{assump}
\begin{remark}
Assumption~\ref{Hyp:A1} is a classic hypothesis in functional data analysis it means that the probability to have  $\boldsymbol{\chi}$, around the element at which the regression operator is evaluated, is non null. Assumption~\ref{Hyp:A2} is used to study the bias term of our estimator while the assumptions~\ref{Hyp:A3} -~\ref{Hyp:A5} are technical conditions needed to prove our result.
\end{remark}

\noindent The following theorem gives the pointwise almost sure convergence with rate of the proposed estimator $\hat{r}_{n}(\chi)$ towards $r(\chi)$.
%%%%%%%%%%%%%%%%%%%%%%%%%%%% %%%%%% Theorem 1 %%%%%%%%%%%%%%%%%%%%%%%
\begin{theorem} \label{Theo1}
Under assumptions~\ref{Hyp:A1} -~\ref{Hyp:A5} , we have:
\begin{small}
$$\mid\hat{r}_{n}(\chi)-r(\chi)\mid=\mathcal{O}\left(h^{k_{1}}\right)+\mathcal{O}\left(h^{k_{2}}\right)+\mathcal{O}\left(\sqrt{\frac{\log(n)}{n\phi_{\chi}(h)}}\right) \text{ a.s}.$$
\end{small}
\end{theorem}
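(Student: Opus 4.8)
The plan is to combine the classical ratio decomposition with a bias/deviation splitting for each of the two building blocks $\hat r_{1,n}$ and $\hat r_{2,n}$. Writing $r=r_1/r_2$ and starting from
\begin{small}
$$\hat r_n(\chi)-r(\chi)=\frac{1}{\hat r_{2,n}(\chi)}\Big[\big(\hat r_{1,n}(\chi)-r_1(\chi)\big)-r(\chi)\big(\hat r_{2,n}(\chi)-r_2(\chi)\big)\Big],$$
\end{small}
the problem reduces to (i) establishing the stated rate for $\hat r_{\ell,n}(\chi)-r_\ell(\chi)$, $\ell=1,2$, and (ii) showing that $\hat r_{2,n}(\chi)$ stays bounded away from $0$ almost surely. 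Point (ii) follows once (i) is in hand, since $r_2(\chi)=\mathbb E[Y^{-2}\mid\boldsymbol\chi=\chi]$ is finite and positive by~\ref{Hyp:A5}, so that $\hat r_{2,n}(\chi)\ge r_2(\chi)/2>0$ for $n$ large enough, a.s.

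For each $\ell$ I would introduce the infeasible version $\tilde r_{\ell,n}(\chi)$ obtained by replacing the estimators $L_n,\bar G_n,\alpha_n$ in $\hat r_{\ell,n}(\chi)$ by their true counterparts $L,\bar G,\alpha$, and split
\begin{small}
$$\hat r_{\ell,n}-r_\ell=\underbrace{(\hat r_{\ell,n}-\tilde r_{\ell,n})}_{\text{plug-in error}}+\underbrace{(\tilde r_{\ell,n}-\mathbf E[\tilde r_{\ell,n}])}_{\text{deviation}}+\underbrace{(\mathbf E[\tilde r_{\ell,n}]-r_\ell)}_{\text{bias}}.$$
\end{small}
The bias term is treated first: using the independence of $(T,S)$ from $(\boldsymbol\chi,Y)$ together with the representation~(\ref{eqn:alpha}) of $\alpha$, the inverse weight $\alpha\,\delta/[L(Z)\bar G(Z)]$ exactly corrects for truncation and censoring, so that $\mathbf E\big[\alpha\,\delta\,Z^{-\ell}/(L(Z)\bar G(Z))\mid\boldsymbol\chi\big]=\mathbb E[Y^{-\ell}\mid\boldsymbol\chi]$. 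Conditioning on $\boldsymbol\chi_i\in B(\chi,h)$ and invoking the Lipschitz-type condition~\ref{Hyp:A2} then yields a bias of order $\mathcal O(h^{k_\ell})$.

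The deviation term is the analytic heart of the argument. Writing $\tilde r_{\ell,n}-\mathbf E[\tilde r_{\ell,n}]$ as a normalized sum of centered, conditionally i.i.d.\ contributions, I would apply a Bernstein-type exponential inequality. The required moment bounds come from the kernel condition~\ref{Hyp:A3}, which gives $\mathbf E[K^m(d(\chi,\boldsymbol\chi)/h)]=\mathcal O(\phi_\chi(h))$, and from the conditional-moment bound~\ref{Hyp:A5}, which controls the $Z^{-\ell m}$ factors; together these show that the normalized sum has variance of order $(n\phi_\chi(h))^{-1}$. Combined with~\ref{Hyp:A1},~\ref{Hyp:A4} and a Borel--Cantelli argument, this delivers the deviation rate $\mathcal O\big(\sqrt{\log(n)/(n\phi_\chi(h))}\big)$ a.s.

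Finally, for the plug-in error I would invoke the uniform strong rates of the product-limit estimators $L_n$, $G_n$ and the consistency of $\alpha_n$ from \cite{lynden1971method}, \cite{tsai1987note} and \cite{he1998estimation}. The main obstacle, and the most delicate part of the whole proof, is keeping the random denominators $L_n(Z_i)\bar G_n(Z_i)$ bounded away from $0$: this is exactly where the support-endpoint assumptions $a_L<a_H$, $b_L\le b_H$ and $b<b_H$ enter, ensuring that $L$ and $\bar G$ are bounded below on the effective range of the $Z_i$, so that the difference of the two reciprocal weights is controlled by $\sup|L_n-L|+\sup|\bar G_n-\bar G|+|\alpha_n-\alpha|$. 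Since these rates are $o\big(\sqrt{\log(n)/(n\phi_\chi(h))}\big)$, the plug-in error is negligible, and collecting the three contributions for $\ell=1,2$ together with the denominator bound~(ii) yields the announced rate.
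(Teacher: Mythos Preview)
Your proposal is correct and follows essentially the same route as the paper: the same ratio decomposition, the same three-way split of $\hat r_{\ell,n}-r_\ell$ into bias, deviation and plug-in error via the infeasible $\tilde r_{\ell,n}$, and the same tools (the weight identity $\mathbf E[\alpha\delta Z^{-\ell}/(L(Z)\bar G(Z))\mid\boldsymbol\chi]=r_\ell(\boldsymbol\chi)$ for the bias, a Bernstein-type inequality for the deviation, product-limit strong rates for the plug-in). The only minor difference is in the plug-in step: rather than bounding $L_n$, $\bar G_n$ and $\alpha_n$ separately as you suggest, the paper uses the identities~(\ref{eqn:alpha})--(\ref{eqn:alphan}) to rewrite $\alpha_n/(L_n\bar G_n)=\bar F_n/C_n$ and $\alpha/(L\bar G)=\bar F/C$, and then controls $\sup_{a_H\le t\le b}|\bar F_n/C_n-\bar F/C|$ directly via the LIL rates of \cite{gijbels1993strong} for $F_n$ and of \cite{zhou1999nonparametric} for $C_n$, which cleanly avoids having to lower-bound $L_n(Z_i)\bar G_n(Z_i)$.
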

\begin{remark}
Our stated rate of convergence is the same as the one obtained by \cite{demongeot2016relative} in the complete data case and by \citep{mechab2019nonparametric} and \citep{altendji2018functional} respectively for the case where the data is subject to censoring or truncation.
\end{remark}
%%%%%%%%%%%%%%%%%%%%%%%%%%%%%%%%%%%%%%%%%%%%%%%%%%%%%%%%
\section{Uniform almost sure convergence}\label{sec5}
%%%%%%%%%%%%%%%%%%%%%%%%%%%%%%%%%%%%%%%%%%%%%%%%%%%%%%%%
In this section we establish the uniform almost sure convergence of our estimator over some subset $\mathcal{S}_{\mathcal{F}}$ of $\mathcal{F}$. In odrer to do so, it is necessary to make some additional topological assumptions  since in the functional case, the uniform consistency is not a straightforward generalization of the pointwise convergence (see \cite{ferraty2010rate}).
\begin{assump}
\label{Hyp:U1}
There exists a  function $\phi(.)$, such that: $\forall \chi \in \mathcal{S}_{\mathcal{F}}$, $\forall h>0$, $0<C \phi(h)<\mathbb{P}( {\chi}  \in B(\chi, h))<C^{\prime}\phi(h)<\infty$.
\end{assump}
\begin{assump}
\label{Hyp:U2}
There exists $\beta>0$ such that $\forall \;  \chi_{1},\chi_{2}\in \mathcal{S}_{\mathcal{F}}^{\beta}, \quad \mid r_{\ell}(\chi_{1})- r_{\ell}(\chi_{2})\mid\leq Cd^{k_{\ell}}(\chi_1,\chi_2),$ where \\
$\mathcal{S}^{\beta}_{\mathcal{F}}=\left\lbrace  \chi \in\mathcal{F}, \exists \chi^{\prime} \in\mathcal{S}_{\mathcal{F}} \text{ such that } d\left(\chi,  \chi^{\prime}\right)\leq \beta\right\rbrace$.
\end{assump}
\begin{assump}
\label{Hyp:U3}
The kernel K is bounded and Lipschitzian on $\left[0,1\right]$.
\end{assump}
\begin{assump}
\label{Hyp:U4}
The functions $\phi(.)$ and  Kolmogorov's $\epsilon$-entropy $\psi_{\mathcal{S}_{\mathcal{F}}}(.)$ of $\mathcal{S}_{\mathcal{F}}$ are such that:
\begin{itemize}
\item[•] There exist $\eta_{0}$ such that $\forall \eta <\eta_{0}$, $\phi^{\prime}\left(\eta_{0}\right)<C$.
\item[•] For $n$ large enough, we have $\frac{\left(\log n\right)^2}{n\phi\left(h\right)}<\psi_{\mathcal{S}_{\mathcal{F}}}\left(\frac{\log n}{n}\right)<\frac{n\phi\left(h\right)}{\log n}.$
\item[•] Kolmogorov's $\epsilon$-entropy $\psi_{\mathcal{S}_{\mathcal{F}}}(.)$ of $\mathcal{S}_{\mathcal{F}}$ verifies that $\sum_{n=1}^{+\infty} \exp\left(\left(1-\gamma\right)\psi_{\mathcal{S}_{\mathcal{F}}}\left(\frac{\log n}{n}\right)\right)<\infty,$\\
for at least one $\gamma >1$. 
\end{itemize}
\end{assump}
\begin{assump}
\label{Hyp:U5}
For any $m\geq2$, $\mathbb{E}\left[ Y^{-m}\mid \boldsymbol{\chi}= \chi \right] \leq C < \infty$ for all $\chi \in\mathcal{S}_{\mathcal{F}}$ and $\inf\limits_{\chi \in \mathcal{S}_{\mathcal{F}}}r_{2}(\chi) \geq C^{\prime}>0$.
\end{assump}
\begin{remark}
Assumptions~\ref{Hyp:U1},~\ref{Hyp:U2},~\ref{Hyp:U5} are the same considered by \cite{demongeot2016relative} and they represent a reformulation of assumptions~\ref{Hyp:A1},~\ref{Hyp:A2},~\ref{Hyp:A5} in the uniform consistency context, while Assumption~\ref{Hyp:U4} controls the Kolmogorov's $\epsilon$-entropy of $\mathcal{S}_{\mathcal{F}}$ and ensures that $\dfrac{\psi_{\mathcal{S}_{\mathcal{F}}}\left(\frac{\log n}{n}\right)}{n\phi(h)}$ tends to $0$ when $n\longrightarrow\infty$ \citep{ferraty2010rate}.
\end{remark}
%%%%%%%%%%%%%%%%%%%%%%%%%%%%%%%%%%%%%% Theorem 2 %%%%%%%%%%%%%%%%%%%%%
\begin{theorem}\label{Theo2}
Let Assumptions ~\ref{Hyp:U1} -~\ref{Hyp:U5} hold. Then we have:
\begin{small}
$$\sup_{\chi \in\mathcal{S}_{\mathcal{F}}}\mid\hat{r}_{n}(\chi )-r(\chi)\mid=\mathcal{O}\left(h^{k_{1}}\right)+\mathcal{O}\left(h^{k_{2}}\right)+\mathcal{O}\left(\sqrt{\frac{\psi_{\mathcal{S}_{\mathcal{F}}}\left(\frac{\log n}{n}\right)}{n\phi\left(h\right)}}\right) \text{ a.s}.$$
\end{small}
\end{theorem}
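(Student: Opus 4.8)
\textbf{Proof of Theorem~\ref{Theo2}.} The plan is to reproduce the ratio decomposition used for the pointwise statement in Theorem~\ref{Theo1}, but to make every estimate uniform over $\mathcal{S}_{\mathcal{F}}$ by means of a covering argument driven by Kolmogorov's $\epsilon$-entropy. First I would write
\begin{small}
$$\hat{r}_{n}(\chi)-r(\chi)=\frac{1}{\hat{r}_{2,n}(\chi)}\Big[\big(\hat{r}_{1,n}(\chi)-r_{1}(\chi)\big)-r(\chi)\big(\hat{r}_{2,n}(\chi)-r_{2}(\chi)\big)\Big],$$
\end{small}
so the entire problem reduces to controlling $\sup_{\chi\in\mathcal{S}_{\mathcal{F}}}|\hat{r}_{\ell,n}(\chi)-r_{\ell}(\chi)|$ for $\ell=1,2$, together with a uniform lower bound on the denominator $\hat{r}_{2,n}(\chi)$. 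The latter is a consequence of the former: since Assumption~\ref{Hyp:U5} gives $\inf_{\chi\in\mathcal{S}_{\mathcal{F}}}r_{2}(\chi)\geq C'>0$, once $\sup_{\chi}|\hat{r}_{2,n}(\chi)-r_{2}(\chi)|\to 0$ a.s.\ the estimated denominator stays bounded away from zero uniformly for $n$ large.

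Next I would remove the estimated weights. I define a pseudo-estimator $\tilde{r}_{\ell,n}$ by replacing $\alpha_{n},L_{n},\bar{G}_{n}$ in $\hat{r}_{\ell,n}$ with their population counterparts $\alpha,L,\bar{G}$. Under the identifiability conditions $a_{L}<a_{H}$ and $b<b_{H}$, the denominators $L(Z_{i})\bar{G}(Z_{i})$ are bounded below by a positive constant on the effective range of the data, so $\sup_{\chi}|\hat{r}_{\ell,n}(\chi)-\tilde{r}_{\ell,n}(\chi)|$ is dominated by $\big(|\alpha_{n}-\alpha|+\sup_{y\leq b}|L_{n}(y)-L(y)|+\sup_{y\leq b}|\bar{G}_{n}(y)-\bar{G}(y)|\big)$ times an a.s.\ $\mathcal{O}(1)$ factor. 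By the known strong uniform rates for the Lynden--Bell and TJW-type product-limit estimators in the LTRC model these three quantities are $\mathcal{O}\big(\sqrt{\log\log n/n}\big)$, which is negligible against the target rate. It thus suffices to treat $\tilde{r}_{\ell,n}$, which I split as usual into a bias part $\mathbf{E}[\tilde{r}_{\ell,n}(\chi)]-r_{\ell}(\chi)$ and a deviation part $\tilde{r}_{\ell,n}(\chi)-\mathbf{E}[\tilde{r}_{\ell,n}(\chi)]$. The key \emph{design unbiasedness} is the identity $\mathbf{E}\big[\alpha\delta Z^{-\ell}/(L(Z)\bar{G}(Z))\mid\boldsymbol{\chi}\big]=r_{\ell}(\boldsymbol{\chi})$, which follows from~\eqref{eqn:alpha} and the mutual independence assumptions. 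Combined with the kernel being supported in $[0,1]$ and the Hölder condition of Assumption~\ref{Hyp:U2}, it yields $\sup_{\chi\in\mathcal{S}_{\mathcal{F}}}|\mathbf{E}[\tilde{r}_{\ell,n}(\chi)]-r_{\ell}(\chi)|=\mathcal{O}(h^{k_{\ell}})$, which accounts for the two bias terms in the statement.

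The deviation term is the heart of the proof and the main obstacle. I would cover $\mathcal{S}_{\mathcal{F}}$ by $N_{n}=\exp\big(\psi_{\mathcal{S}_{\mathcal{F}}}(\log n/n)\big)$ balls of radius $\log n/n$ with centers $t_{1},\dots,t_{N_{n}}$, and for each $\chi$ let $t(\chi)$ be the center of the ball containing it. Writing
\begin{small}
$$\tilde{r}_{\ell,n}(\chi)-\mathbf{E}[\tilde{r}_{\ell,n}(\chi)]=\big(\tilde{r}_{\ell,n}(\chi)-\tilde{r}_{\ell,n}(t(\chi))\big)+\big(\tilde{r}_{\ell,n}(t(\chi))-\mathbf{E}[\tilde{r}_{\ell,n}(t(\chi))]\big)+\big(\mathbf{E}[\tilde{r}_{\ell,n}(t(\chi))]-\mathbf{E}[\tilde{r}_{\ell,n}(\chi)]\big),$$
\end{small}
the first and third pieces are handled with the Lipschitz property of $K$ (Assumption~\ref{Hyp:U3}): each kernel increment is bounded by a constant times $d(\chi,t(\chi))/h\leq(\log n)/(nh)$, and Assumption~\ref{Hyp:U4} is precisely what renders these two terms negligible relative to $\sqrt{\psi_{\mathcal{S}_{\mathcal{F}}}(\log n/n)/(n\phi(h))}$. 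For the middle, finite-maximum term I would apply a Bernstein-type exponential inequality at each fixed center $t_{k}$: the summands $\alpha\delta_{i}K(d(t_{k},\boldsymbol{\chi}_{i})/h)Z_{i}^{-\ell}/(L(Z_{i})\bar{G}(Z_{i}))$ are bounded thanks to the weight lower bounds above, while Assumption~\ref{Hyp:U5} supplies the conditional moment control $\mathbb{E}[Y^{-m}\mid\boldsymbol{\chi}]\leq C$ needed to bound their variance by $\mathcal{O}(\phi(h))$. This gives a per-center deviation of order $\sqrt{\psi_{\mathcal{S}_{\mathcal{F}}}(\log n/n)/(n\phi(h))}$ with exponentially small exceptional probability; a union bound over the $N_{n}$ centers, the summability condition in Assumption~\ref{Hyp:U4}, and the Borel--Cantelli lemma upgrade this to the claimed almost sure uniform bound. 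Assembling the bias and deviation estimates for $\ell=1,2$ and substituting them into the ratio decomposition yields the announced rate. The delicate point throughout is the simultaneous calibration of the covering radius, the entropy term, and the small-ball factor $\phi(h)$ so that all three pieces of the deviation term collapse onto the single stated rate; this balance is exactly what Assumption~\ref{Hyp:U4} encodes.
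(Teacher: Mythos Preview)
Your proposal is correct and follows essentially the same route as the paper: the same ratio decomposition, the same pseudo-estimator $\tilde{r}_{\ell,n}$ with the $\mathcal{O}(\sqrt{\log\log n/n})$ replacement error, the same uniform bias bound via~\ref{Hyp:U2}, and the same three-piece covering argument at radius $\log n/n$ with a Bernstein-type inequality plus union bound for the centers and the Lipschitz kernel for the oscillation terms. One small slip: the summands $\alpha\delta_i K_i Z_i^{-\ell}/(L(Z_i)\bar G(Z_i))$ are \emph{not} bounded in general (only the weights $L\bar G$ are bounded below, while $Z_i^{-\ell}$ need not be), so you must use the moment-based exponential inequality (as in Corollary~A8 of Ferraty--Vieu) rather than the bounded-summand Bernstein version---which is exactly what the paper does and what your invocation of~\ref{Hyp:U5} already anticipates.
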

\begin{remark}
The rate of uniform convergence obtained is equivalent to the uniform rate in the complete data framework from \cite{demongeot2016relative} and to the one stated in the censoring case in the work of \cite{fetitah2020strong}. To our knowledge, no research paper has yet dealt with the uniform convergence in a relative error functional regression context when the variable of interest is left truncated. 
\end{remark}
\section{Simulation study}\label{sec6}
In order to investigate the performance of the estimator given by~\eqref{eqn:estim rer} a simulation study must be conducted. First of all we need to generate the necessary data.
We choose the size $n$ and we consider the next curves generated by:
\begin{equation}\label{20curves}
\boldsymbol{\chi}_{i}(t)= a\cos(2\pi t)+ b\sin(4\pi t) +c(t-0.5)(t-0.25)\text{ with } t\in \left[0,1\right],
\end{equation}
where $a$, $b$, $c$ follow the uniform law $\mathcal{U}_{\left[0,3\right]}$. All the curves  are discretized on the same grid generated from 100 equidistant measurements on $\left[0,1\right]$. We also choose the following regression operator $r$ defined by $r(\boldsymbol{\chi}_{i})=\int_{0}^{1}\boldsymbol{\chi}_{i}^{2}(t)dt + 10.$ To have $Y_{i}=r(\boldsymbol{\chi}_{i})+\epsilon_{i}$ we generate $\epsilon_{i}$ as a random noise sampled from a $\mathcal{N}(0,1)$. Next, we generate $S_{i}$ from an exponential distribution $\mathcal{E}\left(\mu\right)$ where $\mu$ is adapted to obtain the desired censoring rate. We set $Z_{i}=\min(Y_{i},S_{i})$ and $\delta_{i}=\mathds{1}_{\left\lbrace Y_{i}\leq S_{i}\right\rbrace}$ and ultimately sample $T_{i}$ from a $\mathcal{N}(\lambda,2)$, where $\lambda$  takes different values to get the desired truncation rate. Finally the observation $(\boldsymbol{\chi}_{i},Z_{i},T_{i},\delta_{i})$ is kept only if $Z_{i}\geq T_{i}$, otherwise it is rejected and the procedure is resumed from the beginning.
This algorithm is repeated until $n$ observations are reached. At the end we also obtain the value of $N$.
 
%%%%%%%%%%%%%%%%%%%%%%%%%%%%%%%%%%%%%%%%%%%
\subsection{Finite sample performance}\label{subsec1}
%%%%%%%%%%%%%%%%%%%%%%%%%%%%%%%%%%%%%%%%%%%
We have observed the behavior of our estimator $\widehat{r}_{n}(\chi )=:\hat{r}_{RER}$ under different configurations : sample size, truncation and censoring rates. We have at the same time compared it to the classical (Nadaraya-Watson) regression estimator noted $\hat{r}_{NW}(\chi)$. 
The semi-metric used in this study is the usual $L_2$ distance between two curves as a measure of proximity. For the kernel choice we picked the asymmetrical quadratic kernel defined by $
K(u)=\frac{3}{2}(1-u^{2})\mathds{1}_{\left]0,1\right[}$, and the optimal bandwidth $h$ is computed using the leave-one-out cross-validation method. 
We computed the Global Mean Squared Error (GMSE) for B replicates of the estimators and given by:
\begin{small}
\begin{equation*}
GMSE=\frac{1}{B\times m}\sum\limits_{u=1}^{B}\sum\limits_{k=1}^{m}\left(\hat{r}_{n}^{\left(u\right)}\left(\chi_{k}\right)-r\left(\chi_{k}\right)\right)^{2}.
\end{equation*}
\end{small}
Here $B=200$, $m=20$ is the number of curves at which the estimator is evaluated and $\widehat{r}_{n}^{\left(u\right)}\left({\chi}_{k}\right)$ is the estimate obtained using sample number $u$. The obtained results are reported in Table~\ref{tab1}.
\begin{table}[H]
\centering
\small{
\caption{GMSE of the relative error regression estimator and the classic regression}\label{tab1}%
\begin{tabular}{ccccc}
\toprule
Censoring Rate $\approx$ & Truncation Rate $\approx$  & Sample size & RER & Classic regression\\
\midrule
20\%    & 20\%     & 100  & 0.2936 & 3.4374 \\
20\%    & 20\%     & 300   & 0.1561 & 2.9494 \\
20\%    & 20\%     & 500  & 0.0993 & 1.1001 \\
\midrule
10\%    & 20\%     & 100  & 0.2898 & 2.4836 \\
20\%    & 20\%     & 100  & 0.2936 & 3.4374 \\
40\%    & 20\%     & 100  & 0.3577 & 4.0566 \\
\midrule
20\%    & 10\%     & 100  & 0.2930 & 3.1910 \\
20\%    & 20\%     & 100  & 0.2936 & 3.4374 \\
20\%    & 40\%     & 100  & 0.3230 & 3.8820\\
\hline
\end{tabular}
}
\end{table}

We can see from Table~\ref{tab1} that in terms of the GMSE the relative error regression estimator has a
better quality of fit than the classical regression one and a better stability since it is less affected
by high percentages of censoring and truncation which comforts the good behavior of our
estimator.
\subsection{Influence function}\label{subsec2}
In this part we shall investigate the robustness of the relative error estimator in the presence of an atypical observation  in the data using the influence function (IF).
The influence function %was introduced by Hampel \citep{ronchetti2009robust} as 
measures  the effect induced by outliers on an estimate. Note that an estimator can be written as a functional of the distribution function $F$ of the data for some functional $\mathcal{T}$ (see \cite{maronna2019robust} and the examples within) that is $\mathcal{T}\left(F\right)$. For an extra data point $z_{0}\in \mathbb{R}$, define $F_{\epsilon}=\left(1-\epsilon\right)F + \epsilon \Delta_{z_{0}}$, which represents  the distribution $F$ contaminated by a small fraction $\epsilon$ of outliers with $\Delta_{z_{0}}$ being the Dirac mass at the added point $z_{0}$. Hence,  the influence function  defined by:
\begin{small}
\begin{equation*}
IF\left(z_{0},\mathcal{T}\left(F\right)\right)=\lim\limits_{\epsilon \longrightarrow 0^{+}}\dfrac{\mathcal{T}\left(F_{\epsilon}\right)-\mathcal{T}\left(F\right)}{\epsilon},
\end{equation*}
\end{small}
represents the sensitivity of $\mathcal{T}\left(F\right)$ to $z_{0}$. From now on, the functional $\mathcal{T}$ is taken to be $\widehat{r}_{RER}$ or $\widehat{r}_{NW}\left(\chi\right)$. The influence function can be seen as an asymptotic version of the standardized sensitivity curve (SSC) defined by $$SSC_{n}\left(z_{0},\hat{r}_{n}\left(\chi_{0}\right)\right)=\left(n+1\right)\left(\widehat{r}_{n+1}\left(\chi_{0};z_{1},\cdots,z_{n},z_{0}\right)-\widehat{r}_{n}\left(\chi_{0};z_{1},\cdots,z_{n}\right)\right)$$ where $\chi_{0}$ is a fixed curve in $\mathcal{F}$ and $\widehat{r}_{n+1}\left(\chi_{0};z_{1},\cdots,z_{n},z_{0}\right)$(resp. $\widehat{r}_{n}\left(\chi_{0};z_{1},\cdots,z_{n}\right)$)is the estimator built using the (n+1)-sample $z_{1},\cdots,z_{n},z_{0}$ (resp. the (n)-sample $z_{1},\cdots,z_{n},z_{0}$) evaluated at $\chi_{0}$ . Indeed, if we add the extra data point $z_{0}$ to the n-sample $z_{1},\cdots,z_{n}$ therefore the fraction of contamination $\epsilon$ is equal to $\frac{1}{n+1}$. Then define the empirical influence function as $EIF_{\widehat{r}_{n}}(\chi)=SSC_{n}(z_{0},\widehat{r}_{n}\left(\chi\right))$. Note that for a robust estimator (meaning that an outlier has a very limited effect on the estimator) the empirical influence function must be close to 0.
\subsubsection{Effect of an outlier}
In order to study the impact of an outlier $z_{0}$ on the relative error estimator and the classic regression one, we shall compare the $EIF_{\widehat{r}_{n}}(\chi)$ obtained for both estimators after generating  20 curves as in (\ref{20curves}) and   a curve $\chi_{0}$. Then we take (arbitrarily) $z_{0}=300$ with $\delta_{0}=1$ (the  non-censoring indicator), and we plot the obrained $EIF$  against the distance between each curve and the curve $\chi_{0}$ for  $n=300$, the censoring rate $CR= 20\%,40\%$ and the truncation rate  $TR=20\%, 40\%$.
\begin{figure}[H]%
\centering
\begin{subfigure}[b]{0.3\textwidth}
\includegraphics[width=\textwidth]{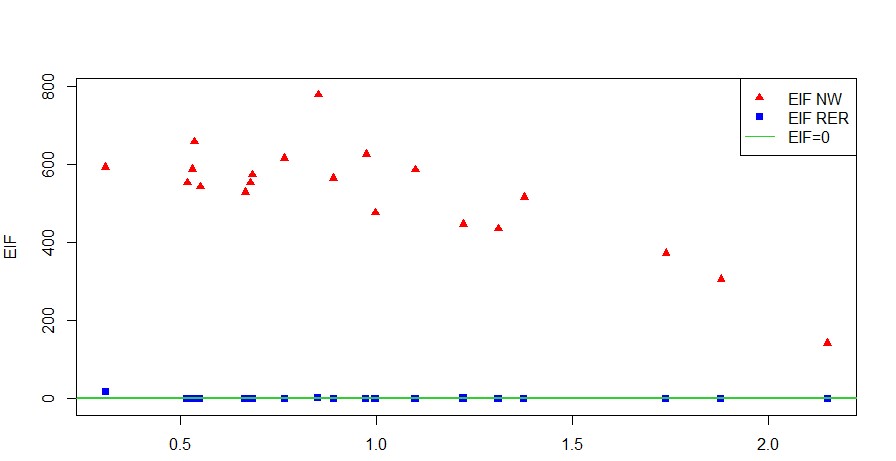}
\subcaption{}
\end{subfigure}
    \begin{subfigure}[b]{0.3\textwidth}
        \includegraphics[width=\textwidth]{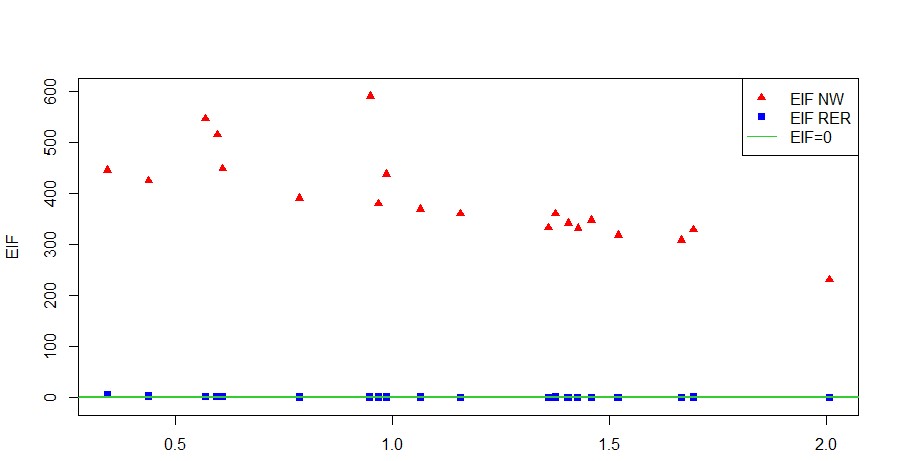}
				\subcaption{}
    \end{subfigure}
    \begin{subfigure}[b]{0.3\textwidth}
        \includegraphics[width=\textwidth]{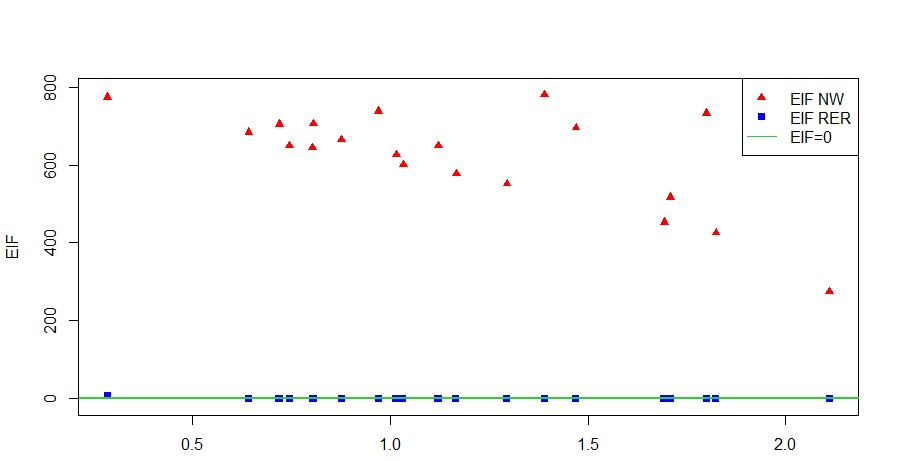}
				\subcaption{}
    \end{subfigure}
    \caption{Empirical influence function in the neighborhood of the curve $\chi_{0}$ for a single outlier $z_{0}=300$  for n=300, and (a)  $CR\approx 20\% TR \approx 20\%$, (b)  $CR\approx 20\% TR \approx 40\%$, (c)  $CR\approx 40\% TR \approx 20\%$ }\label{fig5}
\end{figure}
\section{Proofs}\label{sec7}
%%%%%%%%%%%%%%%%%%%%%%%%%%%%%%%%%%%%%%%%%%%%%%%%
\subsection*{Proof of Theorem 1}
\noindent The proof of Theorem~\ref{Theo1} is based on the following decomposition and the results from Lemma~\ref{Lem1}, Lemma~\ref{Lem2} and Lemma~\ref{Lem3} below.:
\begin{small}
\begin{align}
\mid\widehat{r}_{n}(\chi)-r(\chi)\mid&\leq\dfrac{1}{\widehat{r}_{2,n}(\chi)}\Big( \mid\widehat{r}_{1,n}(\chi)-\tilde{r}_{1,n}(\chi)\mid+\mid\tilde{r}_{1,n}(\chi)-\mathbf{E}(\tilde{r}_{1,n}(\chi))\mid +\mid\mathbf{E}(\tilde{r}_{1,n}(\chi))-r_{1}(\chi)\mid\Big)\nonumber &\\
&+\dfrac{r(\chi)}{\widehat{r}_{2,n}(\chi)}\Big( \mid r_{2}(\chi)-\mathbf{E}(\tilde{r}_{2,n}(\chi))\mid +\mid\mathbf{E}(\tilde{r}_{2,n}(\chi))-\tilde{r}_{2,n}(\chi)\mid+\mid\tilde{r}_{2,n}(\chi)-\widehat{r}_{2,n}(\chi)\mid\Big). \label{eqn}&
\end{align}
\end{small}
Where
\begin{small}
\begin{align*}
\tilde{r}_{\ell,n}(\chi)=\dfrac{1}{n \mathbb{E}\left(K( \frac{d(\chi, \boldsymbol{\chi}_{1})}{h})\right)}\sum_{i=1}^{n}\frac{\alpha\delta_{i}K(\frac{d(\chi , \boldsymbol{\chi}_{i})}{h})Z_{i}^{-\ell}}{L(Z_{i})\bar{G}(Z_{i})} \text{ for } \ell=1,2.
\end{align*}
\end{small} 
%%%%%%%%%%%%%%%%%%%%%%%%%%%%%%%%% Lemma 1   %%%%%%%%%%
\begin{lemma}
\label{Lem1}
Under  assumptions ~\ref{Hyp:A1} -~\ref{Hyp:A4} we have for $\ell=1,2$ :
$$\mid \mathbf{E}(\tilde{r}_{\ell,n}(\chi))-r_{\ell}(\chi)\mid = \mathcal{O}\left(h^{k_{\ell}}\right).$$
\end{lemma}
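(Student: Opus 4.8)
The plan is to reduce the truncation-and-censoring–weighted expectation to an ordinary kernel-regression bias and then apply the Lipschitz condition. Since, conditionally on $n$, the observations are iid, the numerator and denominator defining $\tilde r_{\ell,n}(\chi)$ each collapse to a single expectation, so that
\[
\mathbf{E}\big(\tilde r_{\ell,n}(\chi)\big)=\frac{1}{\mathbb{E}\big[K(d(\chi,\boldsymbol{\chi}_1)/h)\big]}\,\mathbf{E}\left[\frac{\alpha\,\delta_1 K(d(\chi,\boldsymbol{\chi}_1)/h)Z_1^{-\ell}}{L(Z_1)\bar G(Z_1)}\right].
\]
First I would establish the key de-biasing identity: for any bounded measurable $g$,
\[
\mathbf{E}\left[\frac{\alpha\,\delta\, Z^{-\ell}}{L(Z)\bar G(Z)}\,g(\boldsymbol{\chi})\right]=\mathbb{E}\big[Y^{-\ell}g(\boldsymbol{\chi})\big].
\]

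To prove this identity I would pass from the conditional measure $\mathbf{P}$ to the original one via $\mathbf{E}[\cdot]=\alpha^{-1}\mathbb{E}[\,\cdot\,\mathds{1}_{\{Z\ge T\}}]$, so that the prefactor $\alpha$ cancels the $\alpha^{-1}$; then use that on $\{\delta=1\}$ one has $Z=Y$ and $\delta\,\mathds{1}_{\{Z\ge T\}}=\mathds{1}_{\{T\le Y\le S\}}$, and integrate out $S$ and $T$. Because $(T,S)$ is independent of $(\boldsymbol{\chi},Y)$ with $T$ independent of $S$, and $G,L$ are continuous,
\[
\mathbb{E}\big[\mathds{1}_{\{T\le Y\le S\}}\mid \boldsymbol{\chi},Y\big]=L(Y)\,\bar G(Y),
\]
which cancels the weight $L(Z)\bar G(Z)=L(Y)\bar G(Y)$ exactly and leaves $\mathbb{E}[Y^{-\ell}g(\boldsymbol{\chi})]$. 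I expect this cancellation — verifying that the chosen weights precisely undo both the truncation distortion and the censoring loss — to be the main obstacle, since it is the only place where the LTRC structure and the independence assumptions of Section~\ref{sec2} are genuinely used.

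With $g(\cdot)=K(d(\chi,\cdot)/h)$ the identity turns the weighted expectation into $\mathbb{E}[K(d(\chi,\boldsymbol{\chi}_1)/h)\,r_\ell(\boldsymbol{\chi}_1)]$ after conditioning on $\boldsymbol{\chi}_1$ and recalling $r_\ell(\boldsymbol{\chi}_1)=\mathbb{E}[Y_1^{-\ell}\mid\boldsymbol{\chi}_1]$. Hence $\mathbf{E}(\tilde r_{\ell,n}(\chi))$ equals the ratio $\mathbb{E}[K\,r_\ell(\boldsymbol{\chi}_1)]/\mathbb{E}[K]$, i.e. exactly the classical Nadaraya--Watson pseudo-expectation, the LTRC weights having disappeared from the bias entirely.

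Finally I would write
\[
\mathbf{E}\big(\tilde r_{\ell,n}(\chi)\big)-r_\ell(\chi)=\frac{\mathbb{E}\big[K(d(\chi,\boldsymbol{\chi}_1)/h)\,(r_\ell(\boldsymbol{\chi}_1)-r_\ell(\chi))\big]}{\mathbb{E}\big[K(d(\chi,\boldsymbol{\chi}_1)/h)\big]}
\]
and bound the right-hand side. By Assumption~\ref{Hyp:A3} the kernel is supported in $[0,1]$, so the integrand vanishes unless $\boldsymbol{\chi}_1\in B(\chi,h)$, where Assumption~\ref{Hyp:A2} gives $|r_\ell(\boldsymbol{\chi}_1)-r_\ell(\chi)|\le C\,d^{k_\ell}(\boldsymbol{\chi}_1,\chi)\le C h^{k_\ell}$. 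Pulling this uniform bound out of the ratio — the two $K$-expectations then cancel — yields $|\mathbf{E}(\tilde r_{\ell,n}(\chi))-r_\ell(\chi)|\le C h^{k_\ell}=\mathcal{O}(h^{k_\ell})$, as claimed; Assumptions~\ref{Hyp:A1} and~\ref{Hyp:A4} guarantee the denominator is positive for $h$ small, so the ratio is well defined throughout.
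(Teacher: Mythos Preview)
Your proposal is correct and follows essentially the same route as the paper: reduce to a single-term expectation, establish the identity $\mathbf{E}\big[\alpha\delta_1 K(d(\chi,\boldsymbol{\chi}_1)/h)Z_1^{-\ell}/(L(Z_1)\bar G(Z_1))\big]=\mathbb{E}\big[K(d(\chi,\boldsymbol{\chi}_1)/h)\,r_\ell(\boldsymbol{\chi}_1)\big]$, and then apply the Lipschitz condition~\ref{Hyp:A2} on the support of $K$ to extract the $h^{k_\ell}$ bound. In fact you spell out the de-biasing identity more carefully than the paper does (the paper simply asserts it in~\eqref{star1}), but the structure and ingredients are identical.
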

%%%%%%%%%%%%%%%%%%%%%%%%%%%%%%%% Preuve lemme 1 %%%%%%%%%%%%%
\begin{proof}
We have that \begin{small}$\mid \mathbf{E}(\tilde{r}_{\ell,n}(\chi))-r_{\ell}(\chi)\mid = \left\vert\dfrac{1}{\mathbb{E}(K(\frac{d(\chi,  \boldsymbol{\chi}_{1})}{h}))}\mathbf{E}\left(\dfrac{\alpha\delta_{1}K(\frac{ d(\chi,  \boldsymbol{\chi}_{1}) }{h} )Z_{1}^{-\ell}}{L(Z_{1})\bar{G}(Z_{1})}\right)-r_{\ell}(\chi)\right\vert,$\end{small} and we can show that:
\begin{small}
\begin{align}\label{star1}
\mathbf{E}\left(\dfrac{\alpha\delta_{1}K(\frac{d(\chi,  \boldsymbol{\chi}_{1})}{h})Z_{1}^{-\ell}}{L(Z_{1})\bar{G}(Z_{1})}\right)=\mathbb{E}\left(K(\frac{d(\chi, \boldsymbol{\chi}_{1})}{h})r_{\ell}(\boldsymbol{\chi}_{1})\right).
\end{align}
\end{small}
Hence and under Assumption~\ref{Hyp:A2} we obtain:
\begin{small}
\begin{align*}
\mid \mathbf{E}(\tilde{r}_{\ell,n}(\chi))-r_{\ell}(\chi)\mid\leq \dfrac{1}{\mathbb{E}(K(\frac{d(\chi, \boldsymbol{\chi}_1)}{h}))}\mathbb{E}\left[K(\frac{d(\chi, \boldsymbol{\chi}_1)}{h})Cd^{k_{\ell}}(\chi, \boldsymbol{\chi}_{1})\right]\leq Ch^{k_{\ell}} .
\end{align*}
\end{small}
\end{proof}
%%%%%%%%%%%%%%%%%%%%%%%%%%%%%%%%%%% Lemma 2   %%%%%%%%%%%%%%%%
\begin{lemma}
\label{Lem2}
Under assumptions ~\ref{Hyp:A1},~\ref{Hyp:A3} -~\ref{Hyp:A5} we have for $\ell=1,2$:
\begin{small}
$$\mid \tilde{r}_{\ell,n}(\chi)-\mathbf{E}(\tilde{r}_{\ell,n}(\chi)) \mid = \mathcal{O}\left(\sqrt{\frac{\log(n)}{n\phi_{\chi}(h)}}\right) \text{ a.s}.$$ 
\end{small}
\end{lemma}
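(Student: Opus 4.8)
The plan is to write the centered quantity as a normalized sum of (conditionally on $n$) i.i.d.\ random variables and then apply a Bernstein-type exponential inequality together with the Borel--Cantelli lemma. Set
$$\Delta_{i} = \frac{\alpha\,\delta_{i}\,K\!\big(\tfrac{d(\chi,\boldsymbol{\chi}_{i})}{h}\big)\,Z_{i}^{-\ell}}{L(Z_{i})\bar{G}(Z_{i})}, \qquad W_{i} = \frac{\Delta_{i}-\mathbf{E}[\Delta_{i}]}{\mathbb{E}\!\left[K\!\big(\tfrac{d(\chi,\boldsymbol{\chi}_{1})}{h}\big)\right]},$$
so that $\tilde{r}_{\ell,n}(\chi)-\mathbf{E}(\tilde{r}_{\ell,n}(\chi)) = \tfrac{1}{n}\sum_{i=1}^{n}W_{i}$ is an average of centered i.i.d.\ variables. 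Since only a concentration bound for a sum of bounded-moment i.i.d.\ terms is needed, the whole argument reduces to a good estimate of $\mathbf{E}|W_{i}|^{m}$.

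The crux is the moment bound $\mathbf{E}|W_{i}|^{m}=\mathcal{O}\big(\phi_{\chi}(h)^{1-m}\big)$ for every $m\geq2$. First I would pass from the truncated measure $\mathbf{E}$ to the original measure $\mathbb{E}$ via $\mathbf{E}[\,\cdot\,]=\alpha^{-1}\mathbb{E}[\,\cdot\,\mathds{1}_{Z\geq T}]$, which is exactly the identity underlying \eqref{star1}, now applied to the $m$-th power rather than the first moment. Using $\delta_{i}^{m}=\delta_{i}$, the fact that $Z_{i}=Y_{i}$ on $\{\delta_{i}=1\}$, and the mutual independence of $(T,S)$ from $(\boldsymbol{\chi},Y)$, integrating out $T$ and $S$ produces a factor $L(Y_{i})\bar{G}(Y_{i})$ that cancels one power in the denominator, leaving
$$\mathbf{E}\big[|\Delta_{i}|^{m}\big]=\alpha^{m-1}\,\mathbb{E}\!\left[\frac{K^{m}\,Y^{-\ell m}}{\big(L(Y)\bar{G}(Y)\big)^{m-1}}\right].$$
Here the identifiability conditions $a_{L}<a_{H}$ and $b<b_{H}$ guarantee that $L(Y)\bar{G}(Y)$ is bounded away from $0$ on the effective support, so the remaining denominator contributes only a constant; conditioning on $\boldsymbol{\chi}$ and invoking Assumption~\ref{Hyp:A5} (with $\ell m\geq2$) controls $\mathbb{E}[Y^{-\ell m}\mid\boldsymbol{\chi}]$, while Assumption~\ref{Hyp:A3} gives $\mathbb{E}[K^{m}]\leq C\,\phi_{\chi}(h)$. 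Hence $\mathbf{E}|\Delta_{i}|^{m}\leq C_{m}\,\phi_{\chi}(h)$, and dividing by $(\mathbb{E}[K])^{m}\asymp\phi_{\chi}(h)^{m}$ (lower bound from Assumption~\ref{Hyp:A3}) yields $\mathbf{E}|W_{i}|^{m}=\mathcal{O}(\phi_{\chi}(h)^{1-m})$; in particular $\sigma^{2}:=\mathbf{E}[W_{i}^{2}]=\mathcal{O}(\phi_{\chi}(h)^{-1})$.

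With this bound the moments satisfy a Cramér condition $\mathbf{E}|W_{i}|^{m}\leq \tfrac{m!}{2}M^{m-2}\sigma^{2}$ with $M\asymp\sigma^{2}\asymp\phi_{\chi}(h)^{-1}$, so Bernstein's inequality gives, for $\eta=\eta_{0}\sqrt{\log(n)/(n\phi_{\chi}(h))}$,
$$\mathbf{P}\!\left(\Big|\tfrac{1}{n}\textstyle\sum_{i=1}^{n}W_{i}\Big|>\eta\right)\leq 2\exp\!\left(-\frac{n\eta^{2}}{2(\sigma^{2}+M\eta)}\right)\leq 2\,n^{-c\eta_{0}^{2}}.$$
Since $\sigma^{2}\asymp\phi_{\chi}(h)^{-1}$ dominates $M\eta$ as $h\to0$, choosing $\eta_{0}$ large enough makes $c\eta_{0}^{2}>1$, so the probabilities are summable, while Assumption~\ref{Hyp:A4} ensures $\eta\to0$. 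The Borel--Cantelli lemma then delivers the claimed almost sure rate. The main obstacle I anticipate is precisely the moment computation: carrying out the change of measure from $\mathbf{E}$ to $\mathbb{E}$ for the general $m$-th power (not just the first moment as in \eqref{star1}) and checking uniformly in $m$ that $L(Y)\bar{G}(Y)$ stays bounded below, so that the geometric/factorial growth of the constants $C_{m}$ remains compatible with the Cramér condition required by Bernstein's inequality.
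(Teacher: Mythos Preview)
Your proposal is correct and follows essentially the same route as the paper: center, bound $\mathbf{E}|W_i|^m$ by $C\phi_\chi(h)^{1-m}$, apply a Bernstein-type exponential inequality (the paper cites it as Corollary~A8.ii in \cite{ferraty2006nonparametric}), and conclude via Borel--Cantelli. Your moment computation via the explicit change of measure $\mathbf{E}\to\mathbb{E}$ is in fact more transparent than the paper's binomial-expansion argument, and your flagged concern about $L(Y)\bar G(Y)$ being bounded below is the one point the paper leaves implicit.
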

%%%%%%%%%%%%%%%%%%%%%%%%%%%%%%%%%%%%%%  Preuve Lemme 2 %%%%%%%%%%%
\begin{proof}
Set $\tilde{r}_{\ell,n}(\chi)-\mathbf{E}(\tilde{r}_{\ell,n}(\chi))=\frac{1}{n}\sum\limits_{i=1}^{n}\frac{1}{\mathbb{E}\left(K(\frac{d(\chi, \boldsymbol{\chi}_1)}{h})\right)}\left[\frac{\alpha\delta_{i}K(\frac{d(\chi, \boldsymbol{\chi}_{i})}{h})Z_{i}^{-\ell}}{L(Z_{i})\bar{G}(Z_{i})}-\mathbf{E}\left(\frac{\alpha\delta_{i}K(\frac{d(\chi, \boldsymbol{\chi}_{i})}{h})Z_{i}^{-\ell}}{L(Z_{i})\bar{G}(Z_{i})}\right)\right]=:\frac{1}{n}\sum\limits_{i=1}^{n}W_{i,\ell}$.
We use an exponential type inequality given in Corrolary A8.ii in \cite{ferraty2006nonparametric}, and for that  we need to evaluate $\mathbf{E}(\mid W_{i,\ell}^{m}\mid)$ for $m\geq2$. Assumptions \ref{Hyp:A3}, \ref{Hyp:A5} and the binomial expansion formula allow to show that:
\begin{small}
\begin{align*}
\mathbf{E}(\mid W_{i,\ell}^{m}\mid)&\leq \max\limits_{k\in \lbrace0,1,2,...,m\rbrace}\left\lbrace\binom{m}{k}C^{m-k}\right\rbrace\sum\limits_{k=0}^{m}\mathbf{E}\left\vert\frac{\left(\frac{\alpha\delta_{i}K(\frac{d(\chi, \boldsymbol{\chi}_{i})}{h})Z_{i}^{-\ell}}{L(Z_{i})\bar{G}(Z_{i})}\right)}{\mathbb{E}\left(K(\frac{d(\chi, \boldsymbol{\chi}_)}{h})\right)}\right\vert^{k}&\\
&\leq C\max\limits_{k\in \lbrace0,1,2,...,m\rbrace}\phi_{\chi}^{-k+1}(h)\leq C\phi_{\chi}^{-m+1}(h).
\end{align*}
\end{small}
We apply the mentioned inequality with $a=\phi_{\chi}^{-\frac{1}{2}}(h)$ and $u_{n}=\frac{a^{2}\log(n)}{n}$, and we obtain by setting $\epsilon=\epsilon_{0}\sqrt{u_{n}}$:
\begin{small}
\begin{align*}
\mathbf{P}\left(\mid \tilde{r}_{\ell,n}(\chi)-\mathbf{E}(\tilde{r}_{\ell,n}(\chi)) \mid>\epsilon_{0}\sqrt{u_{n}}\right)=\mathbf{P}\left(\frac{1}{n}\left\vert\sum_{i=1}^{n} W_{i,\ell}\right\vert>\epsilon_{0}\sqrt{u_{n}}\right)\leq 2 \exp\left(-\frac{\epsilon_{0}^{2}\log(n)}{2(1+\epsilon_{0}\sqrt{u_{n}})}\right)\leq 2n^{-C\epsilon_{0}}.
\end{align*}
\end{small}
An appropriate choice of $\epsilon_{0}$ and Borel-Cantelli's lemma allow us to conclude the proof.
\end{proof}
%%%%%%%%%%%%%%%%%%%%%%%%%%%%%%%%%%%%%  Lemme 3 %%%%%%%%%%%%%%%%%
\begin{lemma}
\label{Lem3}
Under assumptions ~\ref{Hyp:A1},~\ref{Hyp:A3} -~\ref{Hyp:A5} we have for $\ell=1,2$:
\begin{small}
$$ \mid\widehat{r}_{\ell,n}(\chi)-\tilde{r}_{\ell,n}(\chi)\mid=\mathcal{O}\left((n^{-1}\log\log(n))^{1/2})\right) \text{ a.s}.$$
\end{small}
\end{lemma}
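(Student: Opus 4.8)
The plan is to reduce everything to a single empirical average in which $\widehat{r}_{\ell,n}$ and $\tilde{r}_{\ell,n}$ differ only through the replacement of $(\alpha,L,\bar{G})$ by the estimators $(\alpha_n,L_n,\bar{G}_n)$, and then to factorise out a uniform bound on the perturbation of these weights. First I would write
$$\widehat{r}_{\ell,n}(\chi)-\tilde{r}_{\ell,n}(\chi)=\frac{1}{n\,\mathbb{E}\!\left[K\!\left(\tfrac{d(\chi,\boldsymbol{\chi}_1)}{h}\right)\right]}\sum_{i=1}^{n}\delta_i K\!\left(\tfrac{d(\chi,\boldsymbol{\chi}_i)}{h}\right)Z_i^{-\ell}\left[\frac{\alpha_n}{L_n(Z_i)\bar{G}_n(Z_i)}-\frac{\alpha}{L(Z_i)\bar{G}(Z_i)}\right],$$
whence
$$\bigl|\widehat{r}_{\ell,n}(\chi)-\tilde{r}_{\ell,n}(\chi)\bigr|\le \sup_{y\le b}\left|\frac{\alpha_n}{L_n(y)\bar{G}_n(y)}-\frac{\alpha}{L(y)\bar{G}(y)}\right|\times\frac{1}{n\,\mathbb{E}[K]}\sum_{i=1}^{n}\delta_i K\!\left(\tfrac{d(\chi,\boldsymbol{\chi}_i)}{h}\right)Z_i^{-\ell},$$
the supremum being taken over $y\le b<b_H$ since, by the identifiability/support conditions of Section~\ref{sec2}, the observed $Z_i$ relevant to the estimator lie below $b$.

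The second factor is harmless. Because $0<L(y)\bar{G}(y)\le 1$, one has
$$\frac{1}{n\,\mathbb{E}[K]}\sum_{i=1}^{n}\delta_i K\!\left(\tfrac{d(\chi,\boldsymbol{\chi}_i)}{h}\right)Z_i^{-\ell}\le \frac{1}{\alpha}\,\tilde{r}_{\ell,n}(\chi),$$
and Lemma~\ref{Lem1} together with Lemma~\ref{Lem2} give $\tilde{r}_{\ell,n}(\chi)\longrightarrow r_{\ell}(\chi)$ a.s.; hence this factor is $\mathcal{O}(1)$ a.s.

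The heart of the argument is the first factor. I would decompose
$$\frac{\alpha_n}{L_n\bar{G}_n}-\frac{\alpha}{L\bar{G}}=\frac{\alpha_n-\alpha}{L_n\bar{G}_n}+\alpha\,\frac{(L-L_n)\bar{G}+L_n(\bar{G}-\bar{G}_n)}{L_n\bar{G}_n\,L\bar{G}},$$
and then invoke the almost sure law-of-the-iterated-logarithm rates available for the Lynden--Bell and TJW-type product-limit estimators and for $\alpha_n$ in the LTRC model, namely
$$\sup_{y\le b}|L_n(y)-L(y)|,\quad \sup_{y\le b}|\bar{G}_n(y)-\bar{G}(y)|,\quad |\alpha_n-\alpha|=\mathcal{O}\!\left(\sqrt{n^{-1}\log\log n}\right)\ \text{a.s.}$$
Since $b<b_H$, the denominators $L,\bar{G}$ are bounded away from $0$ on $\{y\le b\}$, and the same holds for $L_n,\bar{G}_n$ for $n$ large enough by the uniform convergence just cited; each of the three terms in the decomposition therefore inherits the $\sqrt{n^{-1}\log\log n}$ rate uniformly in $y\le b$. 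Multiplying the two factors yields the announced bound.

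The main obstacle is precisely the control of this first factor: one must marry the uniform LIL rates for $L_n$ and $G_n$ and the convergence rate of $\alpha_n$ with a uniform lower bound on the denominators near the upper endpoint of the support. It is here that the support assumptions $a_L<a_H$, $b_L\le b_H$ and $b<b_H$ are indispensable, as they prevent $L(y)\bar{G}(y)$ from degenerating over the range on which the supremum is taken.
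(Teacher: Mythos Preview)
Your argument is correct and follows the same two-factor strategy as the paper: bound $|\widehat r_{\ell,n}-\tilde r_{\ell,n}|$ by a uniform weight-perturbation times the raw kernel average, show the latter is $\mathcal{O}(1)$ a.s., and show the former has the LIL rate. The difference lies in how each factor is handled.

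For the weight-perturbation, the paper exploits the identity $\dfrac{\alpha}{L(y)\bar G(y)}=\dfrac{\bar F(y)}{C(y)}$ (and its empirical analogue), so that the whole first factor reduces to $\sup_{a_H\le t\le b}\bigl|\bar F_n(t)/C_n(t)-\bar F(t)/C(t)\bigr|$. This is then controlled by only two well-documented LIL results: $\sup|F_n-F|$ from \cite{gijbels1993strong} and $\sup|C_n-C|$ from \cite{zhou1999nonparametric}. Your route, decomposing into $\alpha_n-\alpha$, $L_n-L$ and $\bar G_n-\bar G$, is equally valid but requires citing LIL rates for three different estimators (Lynden--Bell, the TJW-type estimator of $G$, and $\alpha_n$); the rate for $\alpha_n$ in particular is not quoted directly in the paper and would have to be derived from~\eqref{eqn:alphan}. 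The paper's use of the $\bar F/C$ identity is therefore more economical in references. One small correction: take the supremum over $a_H\le y\le b$, not merely $y\le b$, so that $L(y)\ge L(a_H)>0$ is guaranteed by $a_L<a_H$; otherwise your lower bound on $L$ fails near the left endpoint.

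For the second factor, your bound $\tfrac{1}{n\mathbb{E}[K]}\sum_i\delta_iK_iZ_i^{-\ell}\le \alpha^{-1}\tilde r_{\ell,n}(\chi)$ together with Lemmas~\ref{Lem1}--\ref{Lem2} is slicker than the paper's treatment, which re-centres and repeats an exponential-inequality argument to get the same $\mathcal{O}(1)$ conclusion.
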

%%%%%%%%%%%%%%%%%%%%%%%%%%%%%%%%%%%%% Preuve Lemme 3  %%%%%%%%%%%%%%%%%%%%%%%%%%%%
\begin{proof}
When replacing $\alpha,\alpha_{n}$ by their expressions~\eqref{eqn:alpha} and~\eqref{eqn:alphan}, we have
\begin{small}
\begin{align*}
\hspace{-1cm}\mid\widehat{r}_{\ell,n}(\chi)-\tilde{r}_{\ell,n}(\chi)\mid
&\leq \sup_{a_{H}\leq t \leq b} \Bigg\vert\frac{\bar{F}_{n}(t)}{C_{n}(t)}-\frac{\bar{F}(t)}{C(t)}\Bigg\vert\frac{1}{n\mathbb{E}\left(K(\frac{d(\chi, \boldsymbol{\chi}_)}{h})\right)}\sum_{i=1}^{n}K(\frac{d(\chi, \boldsymbol{\chi}_{i})}{h})\delta_{i}Z_{i}^{-\ell}&\\
&\leq \underbrace{\Bigg\vert\frac{\sup\limits_{a_{H}\leq t \leq b}\mid F_{n}(t)-F(t)\mid}{\inf\limits_{a_{H}\leq t \leq b}\mid C(t)\mid - \sup\limits_{a_{H}\leq t \leq b} \mid C_{n}(t)-C(t) \mid}+\frac{\sup\limits_{a_{H}\leq t \leq b} \mid C_{n}(t)-C(t) \mid}{\inf\limits_{a_{H}\leq t \leq b}\mid C(t)\mid(\inf\limits_{a_{H}\leq t \leq b}\mid C(t)\mid - \sup\limits_{a_{H}\leq t \leq b} \mid C_{n}(t)-C(t) \mid)}\Bigg\vert}_{\mathcal{I}_{1}}&\\
&\times \underbrace{\frac{1}{n\mathbb{E}\left(K(\frac{d(\chi, \boldsymbol{\chi}_)}{h})\right)}\sum_{i=1}^{n}K(\frac{d(\chi, \boldsymbol{\chi}_{i})}{h})\delta_{i}Z_{i}^{-\ell}}_{\mathcal{I}_{2}} = \mathcal{I}_{1} \times \mathcal{I}_{2}.
\end{align*}
\end{small}
On the one hand by Corollary 1 in \cite{gijbels1993strong} we have that 
\begin{small}$\sup\limits_{a_{H}\leq t \leq b}\mid F_{n}(t)-F(t)\mid=\mathcal{O}\left((n^{-1}\log\log(n))^{1/2})\right)$\end{small}. Also from Lemma 3.3 in \cite{zhou1999nonparametric} we have\begin{small}$\sup\limits_{a_{H}\leq t \leq b} \mid C_{n}(t)-C(t) \mid =\mathcal{O}\left((n^{-1}\log\log(n))^{1/2})\right)$\end{small} a.s, and using the fact that $C(t)> 0$ so $\exists$ $\beta>0$ such that $C(t)\geq\beta>0$ we obtain:
\begin{small}
\begin{equation}
\label{eqn1}
\mathcal{I}_{1}=\mathcal{O}\left((n^{-1}\log\log(n))^{1/2})\right) \text{ a.s}.
\end{equation}
\end{small}
On the other hand by stationarity we have:
\begin{small}
\begin{align*}
\mathcal{I}_{2}&=\underbrace{\frac{1}{n\mathbb{E}\left(K(\frac{d(\chi, \boldsymbol{\chi}_)}{h})\right)}\sum_{i=1}^{n}\left[ K(\frac{d(\chi, \boldsymbol{\chi}_{i})}{h})\delta_{i}Z_{i}^{-\ell}-\mathbf{E}\left( K(\frac{d(\chi, \boldsymbol{\chi}_{i})}{h})\delta_{i}Z_{i}^{-\ell}\right)\right]}_{\mathcal{I}_{2.1}} + \underbrace{\frac{1}{\mathbb{E}\left(K(\frac{d(\chi, \boldsymbol{\chi}_)}{h})\right)}\mathbf{E}\left( K(\frac{d(\chi, \boldsymbol{\chi}_{i})}{h})\delta_{i}Z_{i}^{-\ell}\right)}_{\mathcal{I}_{2.2}}.&
\end{align*}
\end{small}
Proceeding as in the proof of Lemma~\ref{Lem2} one can show that $\mathcal{I}_{2.1}=\mathcal{O}\left(\sqrt{\frac{\log(n)}{n\phi_{\chi}(h)}}\right)$ a.s. Next,
\begin{small}
\begin{align*}
\mathcal{I}_{2.2}\leq \frac{\alpha^{-1}}{\mathbb{E}\left(K(\frac{d(\chi, \boldsymbol{\chi}_)}{h})\right)} \sup\limits_{a_{H} \leq t \leq b} L(t)\bar{G}(t)\iint K(\frac{d(\chi,u)}{h})y^{-\ell}f_{Y\mid \boldsymbol{\chi}}(y \mid u)dy d\mathbb{P}^{\boldsymbol{\chi}}(u)\leq\alpha^{-1} \sup\limits_{a_{H} \leq t \leq b} L(t)\bar{G}(t) C.
\end{align*}
\end{small}
Then \begin{small}$\mathcal{I}_{2.2}=\mathcal{O}\left(1\right)$\end{small},
so \begin{small}$\mathcal{I}_{2}=\mathcal{O}\left(1\right) \text{ a.s}$\end{small}. We conclude that\begin{small}$\mid\widehat{r}_{\ell,n}(\chi)-\tilde{r}_{\ell,n}(\chi)\mid=\mathcal{O}\left((n^{-1}\log\log(n))^{1/2})\right) \text{ a.s}.$\end{small} 
\end{proof}
\begin{flushleft}
Theorem~\ref{Theo1} is proved. $\square$
\end{flushleft}
\subsection*{Proof of Theorem 2}
To prove Theorem~\ref{Theo2} we need some additional lemmas.
\begin{lemma}
\label{LemU1}
Under assumptions ~\ref{Hyp:U1}-~\ref{Hyp:U3} $\ $ we have for $\ell=1,2$ :
\begin{small}
$$\sup_{\chi\in\mathcal{S}_{\mathcal{F}}}\mid \mathbf{E}(\tilde{r}_{\ell,n}(\chi))-r_{\ell}(\chi)\mid = \mathcal{O}\left(h^{k_{\ell}}\right).$$
\end{small}
\end{lemma}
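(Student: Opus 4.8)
The plan is to mirror the proof of Lemma~\ref{Lem1} almost verbatim, the only new ingredient being that every constant must be shown to be independent of $\chi$ so that the supremum over $\mathcal{S}_{\mathcal{F}}$ can be taken at the very end. First I would reuse the exact bias identity~\eqref{star1}, which holds for each fixed $\chi$ and hence in particular for every $\chi\in\mathcal{S}_{\mathcal{F}}$; it rewrites the bias as
$$\mathbf{E}(\tilde{r}_{\ell,n}(\chi))-r_{\ell}(\chi)=\frac{1}{\mathbb{E}\left(K\left(\frac{d(\chi,\boldsymbol{\chi}_{1})}{h}\right)\right)}\mathbb{E}\left[K\left(\frac{d(\chi,\boldsymbol{\chi}_{1})}{h}\right)\left(r_{\ell}(\boldsymbol{\chi}_{1})-r_{\ell}(\chi)\right)\right],$$
with denominator strictly positive by Assumption~\ref{Hyp:U1}.

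The heart of the argument is to bound the increment $r_{\ell}(\boldsymbol{\chi}_{1})-r_{\ell}(\chi)$ uniformly on the support of the kernel. Because $K$ is supported on $[0,1]$, the integrand vanishes unless $d(\chi,\boldsymbol{\chi}_{1})\leq h$; and once $n$ is large enough that $h<\beta$ (guaranteed eventually since $h\to0$), such a $\boldsymbol{\chi}_{1}$ lies within distance $\beta$ of $\chi\in\mathcal{S}_{\mathcal{F}}$ and therefore belongs to $\mathcal{S}_{\mathcal{F}}^{\beta}$. I may then invoke Assumption~\ref{Hyp:U2}, whose Hölder constant $C$ is common to all pairs in $\mathcal{S}_{\mathcal{F}}^{\beta}$, to obtain $\mid r_{\ell}(\boldsymbol{\chi}_{1})-r_{\ell}(\chi)\mid\leq Cd^{k_{\ell}}(\chi,\boldsymbol{\chi}_{1})\leq Ch^{k_{\ell}}$ on that support, with $C$ free of $\chi$.

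Plugging this uniform estimate back in, the factor $Ch^{k_{\ell}}$ comes out of the expectation and the denominator $\mathbb{E}\left(K\left(\frac{d(\chi,\boldsymbol{\chi}_{1})}{h}\right)\right)$ cancels, so $\mid\mathbf{E}(\tilde{r}_{\ell,n}(\chi))-r_{\ell}(\chi)\mid\leq Ch^{k_{\ell}}$ with a $\chi$-independent constant. Taking $\sup_{\chi\in\mathcal{S}_{\mathcal{F}}}$ then yields the claimed $\mathcal{O}(h^{k_{\ell}})$ rate. The single step that needs care---the only place where the pointwise proof does not transfer mechanically---is the set-membership verification $\boldsymbol{\chi}_{1}\in\mathcal{S}_{\mathcal{F}}^{\beta}$ that licenses the uniform Hölder bound; once the bandwidth drops below $\beta$, the uniformity of the constant in Assumption~\ref{Hyp:U2} does all the remaining work and the supremum is harmless.
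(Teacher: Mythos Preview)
Your proposal is correct and follows exactly the paper's own route: the paper's proof of Lemma~\ref{LemU1} consists of the single sentence ``Obtained immediately from Lemma~\ref{Lem1},'' and your argument is precisely that immediate extension, spelled out with the care needed to check that the H\"older constant from Assumption~\ref{Hyp:U2} is uniform over $\mathcal{S}_{\mathcal{F}}$ via the $\mathcal{S}_{\mathcal{F}}^{\beta}$ membership. If anything, you are more explicit than the paper about why the supremum is harmless.
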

\begin{proof}
Obtained immediately from Lemma~\ref{Lem1}.
\end{proof}
\begin{lemma}
\label{LemU3}
Under assumptions~\ref{Hyp:U1}-~\ref{Hyp:U3} we have for $\ell=1,2$:
\begin{small}
$$\sup_{\chi\in\mathcal{S}_{\mathcal{F}}}\mid\widehat{r}_{\ell,n}(\chi)-\tilde{r}_{\ell,n}(\chi)\mid=\mathcal{O}\left((n^{-1}\log\log(n))^{1/2})\right)\text{ a.s}.$$
\end{small}
\end{lemma}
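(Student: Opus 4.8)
The plan is to mirror the proof of Lemma~\ref{Lem3}, exploiting the crucial structural feature that the dominant factor there is \emph{free of} $\chi$, so that upgrading to a supremum over $\mathcal{S}_{\mathcal{F}}$ costs essentially nothing on that factor. Substituting the expressions~\eqref{eqn:alpha} and~\eqref{eqn:alphan} for $\alpha$ and $\alpha_{n}$ and using the cancellations $\frac{\alpha_{n}}{L_{n}(Z_{i})\bar{G}_{n}(Z_{i})}=\frac{\bar{F}_{n}(Z_{i})}{C_{n}(Z_{i})}$ and $\frac{\alpha}{L(Z_{i})\bar{G}(Z_{i})}=\frac{\bar{F}(Z_{i})}{C(Z_{i})}$, I would factor, exactly as in Lemma~\ref{Lem3},
$$\sup_{\chi\in\mathcal{S}_{\mathcal{F}}}\mid\widehat{r}_{\ell,n}(\chi)-\tilde{r}_{\ell,n}(\chi)\mid\leq \mathcal{I}_{1}\times\sup_{\chi\in\mathcal{S}_{\mathcal{F}}}\mathcal{I}_{2}(\chi),$$
where $\mathcal{I}_{1}=\sup_{a_{H}\leq t\leq b}\bigl\lvert\frac{\bar{F}_{n}(t)}{C_{n}(t)}-\frac{\bar{F}(t)}{C(t)}\bigr\rvert$ depends only on $t$ and not on $\chi$, so it pulls out of the supremum over $\mathcal{S}_{\mathcal{F}}$ untouched.

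The first factor is then handled verbatim as in Lemma~\ref{Lem3}: Corollary~1 of \cite{gijbels1993strong} and Lemma~3.3 of \cite{zhou1999nonparametric} supply the strong uniform rates of $F_{n}$ and $C_{n}$ on $[a_{H},b]$, and since $C(t)\geq\beta>0$ there, one gets $\mathcal{I}_{1}=\mathcal{O}\bigl((n^{-1}\log\log n)^{1/2}\bigr)$ a.s. It therefore only remains to establish $\sup_{\chi\in\mathcal{S}_{\mathcal{F}}}\mathcal{I}_{2}(\chi)=\mathcal{O}(1)$ a.s., the weak order $\mathcal{O}(1)$ (rather than a vanishing rate) being all that is needed to preserve the overall rate.

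For this I would split $\mathcal{I}_{2}(\chi)=\mathcal{I}_{2.1}(\chi)+\mathcal{I}_{2.2}(\chi)$ as in Lemma~\ref{Lem3}. The deterministic term $\mathcal{I}_{2.2}$ is bounded by $\alpha^{-1}\sup_{a_{H}\leq t\leq b}L(t)\bar{G}(t)\,C$, where under Assumption~\ref{Hyp:U5} the constant $C$ majorizes $\mathbb{E}[Y^{-\ell}\mid\boldsymbol{\chi}=\chi]$ \emph{uniformly} over $\mathcal{S}_{\mathcal{F}}$; hence $\sup_{\chi}\mathcal{I}_{2.2}(\chi)=\mathcal{O}(1)$. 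For the centered term $\mathcal{I}_{2.1}$ I would run the stochastic argument of Lemma~\ref{Lem2} \emph{uniformly} in $\chi$: cover $\mathcal{S}_{\mathcal{F}}$ by balls of radius $\log n/n$, control the resulting finite maximum via the exponential inequality together with the moment bound $\mathbf{E}\lvert W_{i,\ell}^{m}\rvert\leq C\phi^{-m+1}(h)$, and use the Lipschitz property of $K$ (Assumption~\ref{Hyp:U3}) to dominate the oscillation inside each ball. The entropy conditions of Assumption~\ref{Hyp:U4} (cf. \cite{ferraty2010rate}) then give $\sup_{\chi}\lvert\mathcal{I}_{2.1}(\chi)\rvert=\mathcal{O}\bigl(\sqrt{\psi_{\mathcal{S}_{\mathcal{F}}}(\log n/n)/(n\phi(h))}\bigr)=o(1)$ a.s.

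Multiplying the two factors yields $\sup_{\chi}\mid\widehat{r}_{\ell,n}(\chi)-\tilde{r}_{\ell,n}(\chi)\mid=\mathcal{O}\bigl((n^{-1}\log\log n)^{1/2}\bigr)\cdot\mathcal{O}(1)$ a.s., which is the claim. The only genuinely new step — and hence the main obstacle — is the uniform control of $\mathcal{I}_{2.1}$ through the covering/entropy argument; everything else is either independent of $\chi$ (the factor $\mathcal{I}_{1}$, inherited unchanged from Lemma~\ref{Lem3}) or follows immediately from the uniform moment bound in Assumption~\ref{Hyp:U5}.
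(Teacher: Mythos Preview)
Your proposal is correct and follows exactly the route the paper takes: its proof of Lemma~\ref{LemU3} is the single sentence ``by following the same steps as in the proof of Lemma~\ref{Lem3} and then taking the supremum over all $\chi$ in $\mathcal{S}_{\mathcal{F}}$,'' and your write-up is simply a careful unpacking of what that means (the $\chi$-free factor $\mathcal{I}_{1}$ pulls out of the supremum and retains its iterated-logarithm rate, while $\sup_{\chi}\mathcal{I}_{2}(\chi)=\mathcal{O}(1)$ a.s.). Note only that in controlling $\mathcal{I}_{2.1}$ uniformly and bounding $\mathcal{I}_{2.2}$ you invoke Assumptions~\ref{Hyp:U4} and~\ref{Hyp:U5}, whereas the lemma as stated lists only~\ref{Hyp:U1}--\ref{Hyp:U3}; this mirrors the paper's own informal handling (Lemma~\ref{Lem3} already used~\ref{Hyp:A5}), so it is a harmless bookkeeping point rather than a gap in your argument.
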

\begin{proof}
By following the same steps as in the proof of Lemma~\ref{Lem3} and then taking the supremum over all $\chi$ in $\mathcal{S}_{\mathcal{F}}$ we get the desired result.
\end{proof}
\begin{lemma}
\label{LemU2}
Under assumptions ~\ref{Hyp:U1},~\ref{Hyp:U3} -~\ref{Hyp:U5} we have for $\ell=1,2$:
\begin{small}
$$\sup_{\chi\in\mathcal{S}_{\mathcal{F}}}\mid \tilde{r}_{\ell,n}(\chi)-\mathbf{E}(\tilde{r}_{\ell,n}(\chi)) \mid = \mathcal{O}\left(\sqrt{\frac{\psi_{\mathcal{S}_{\mathcal{F}}}\left(\frac{\log n}{n}\right)}{n\phi(h)}}\right) \text{ a.s}.$$ 
\end{small}
\end{lemma}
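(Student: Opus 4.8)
The plan is to upgrade the exponential-inequality argument of Lemma~\ref{Lem2} to a uniform statement by discretising $\mathcal{S}_{\mathcal{F}}$ through a covering whose cardinality is governed by Kolmogorov's $\epsilon$-entropy, in the spirit of \cite{ferraty2010rate}. First I would cover $\mathcal{S}_{\mathcal{F}}$ by $d_{n}:=N_{\frac{\log n}{n}}(\mathcal{S}_{\mathcal{F}})$ balls of radius $\frac{\log n}{n}$ with centres $\chi_{1},\dots,\chi_{d_{n}}$, so that by definition of the entropy $\log d_{n}=\psi_{\mathcal{S}_{\mathcal{F}}}\!\left(\frac{\log n}{n}\right)$. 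For $\chi\in\mathcal{S}_{\mathcal{F}}$ choose $k(\chi)$ with $d(\chi,\chi_{k(\chi)})\leq\frac{\log n}{n}$ and use the triangle inequality
\begin{small}
\begin{align*}
\mid \tilde{r}_{\ell,n}(\chi)-\mathbf{E}(\tilde{r}_{\ell,n}(\chi))\mid
&\leq \underbrace{\mid \tilde{r}_{\ell,n}(\chi)-\tilde{r}_{\ell,n}(\chi_{k(\chi)})\mid}_{T_{1}}
+\underbrace{\mid \tilde{r}_{\ell,n}(\chi_{k(\chi)})-\mathbf{E}(\tilde{r}_{\ell,n}(\chi_{k(\chi)}))\mid}_{T_{2}}\\
&\quad+\underbrace{\mid \mathbf{E}(\tilde{r}_{\ell,n}(\chi_{k(\chi)}))-\mathbf{E}(\tilde{r}_{\ell,n}(\chi))\mid}_{T_{3}}.
\end{align*}
\end{small}

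Next I would dispose of the two equicontinuity terms $T_{1}$ and $T_{3}$. The sole dependence of $\tilde{r}_{\ell,n}$ on $\chi$ enters through $K(d(\chi,\boldsymbol{\chi}_{i})/h)$ and the normaliser $\mathbb{E}[K(d(\chi,\boldsymbol{\chi}_{1})/h)]$; hence the Lipschitz property of $K$ (Assumption~\ref{Hyp:U3}) together with $|d(\chi,\boldsymbol{\chi}_{i})-d(\chi_{k(\chi)},\boldsymbol{\chi}_{i})|\leq d(\chi,\chi_{k(\chi)})\leq\frac{\log n}{n}$ bounds each kernel increment by $C\frac{\log n}{nh}$. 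Combining this with Assumption~\ref{Hyp:U1} (to bound the normaliser below by $C\phi(h)$) and the a.s. boundedness of $\frac{1}{n}\sum_{i}\frac{\alpha\delta_{i}Z_{i}^{-\ell}}{L(Z_{i})\bar{G}(Z_{i})}$, which follows from~\eqref{star1} and the strong law of large numbers under Assumption~\ref{Hyp:U5}, gives $\sup_{\chi\in\mathcal{S}_{\mathcal{F}}}(T_{1}+T_{3})=\mathcal{O}\!\left(\frac{\log n}{nh\phi(h)}\right)$ a.s., which the constraints of Assumption~\ref{Hyp:U4} render negligible with respect to the announced rate.

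The core of the argument is the middle term $\sup_{\chi\in\mathcal{S}_{\mathcal{F}}}T_{2}=\max_{k\leq d_{n}}\mid\tilde{r}_{\ell,n}(\chi_{k})-\mathbf{E}(\tilde{r}_{\ell,n}(\chi_{k}))\mid$, which I would treat by a union bound over the $d_{n}$ centres combined with the very exponential inequality of Lemma~\ref{Lem2}. Taking $\eta=\eta_{0}\sqrt{\psi_{\mathcal{S}_{\mathcal{F}}}(\frac{\log n}{n})/(n\phi(h))}$ and reusing the moment control $\mathbf{E}\mid W_{i,\ell}^{m}\mid\leq C\phi^{-m+1}(h)$ (uniform in $\chi\in\mathcal{S}_{\mathcal{F}}$ by Assumptions~\ref{Hyp:U1},~\ref{Hyp:U3},~\ref{Hyp:U5}) with $a=\phi^{-1/2}(h)$, each centre obeys
\begin{small}
$$\mathbf{P}\!\left(\mid\tilde{r}_{\ell,n}(\chi_{k})-\mathbf{E}(\tilde{r}_{\ell,n}(\chi_{k}))\mid>\eta\right)\leq 2\exp\!\left(-C\eta_{0}^{2}\,\psi_{\mathcal{S}_{\mathcal{F}}}\!\left(\tfrac{\log n}{n}\right)\right),$$
\end{small}
whence, using $d_{n}=\exp(\psi_{\mathcal{S}_{\mathcal{F}}}(\frac{\log n}{n}))$,
\begin{small}
$$\mathbf{P}\!\left(\max_{k\leq d_{n}}\mid\tilde{r}_{\ell,n}(\chi_{k})-\mathbf{E}(\tilde{r}_{\ell,n}(\chi_{k}))\mid>\eta\right)\leq 2\exp\!\left((1-C\eta_{0}^{2})\,\psi_{\mathcal{S}_{\mathcal{F}}}\!\left(\tfrac{\log n}{n}\right)\right).$$
\end{small}
Choosing $\eta_{0}$ large enough that $C\eta_{0}^{2}-1>\gamma-1$ for the $\gamma>1$ of Assumption~\ref{Hyp:U4}, the third bullet of that assumption makes the right-hand side summable, so Borel--Cantelli yields $\sup_{\chi}T_{2}=\mathcal{O}(\eta)$ a.s. Assembling $T_{1},T_{2},T_{3}$ gives the stated rate.

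The main obstacle I anticipate is the equicontinuity step: because $K$ has support $[0,1]$, moving $\chi$ to its centre $\chi_{k(\chi)}$ can turn individual summands on or off, and the random factors $Z_{i}^{-\ell}$ are unbounded. Making $T_{1}$ and $T_{3}$ rigorously of order $\frac{\log n}{nh\phi(h)}$ therefore requires the bounded-and-Lipschitz form of Assumption~\ref{Hyp:U3} to absorb the support switching and the uniform conditional-moment bound of Assumption~\ref{Hyp:U5} to tame the $Z_{i}^{-\ell}$ weights uniformly over $\mathcal{S}_{\mathcal{F}}$; verifying simultaneously that this error is dominated by $\sqrt{\psi_{\mathcal{S}_{\mathcal{F}}}(\frac{\log n}{n})/(n\phi(h))}$ is exactly what the entropy/bandwidth balance in Assumption~\ref{Hyp:U4} is designed to secure.
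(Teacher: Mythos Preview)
Your skeleton coincides with the paper's: the same covering of $\mathcal{S}_{\mathcal{F}}$ at scale $\epsilon=\frac{\log n}{n}$, the same three–term split, and an identical treatment of the middle term $T_{2}$ via a union bound over the $N_{\epsilon}(\mathcal{S}_{\mathcal{F}})$ centres combined with the exponential inequality of Lemma~\ref{Lem2} and the summability condition in Assumption~\ref{Hyp:U4}. For $T_{3}$ the paper reduces to $T_{1}$ through $\mathcal{L}_{3}\leq\mathbf{E}(\mathcal{L}_{1})$, which is equivalent to what you do.

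The one substantive divergence is in $T_{1}$. The paper does \emph{not} factor out the kernel increment and appeal to the SLLN; instead it keeps the full summands $F_{i}=\dfrac{\epsilon\,\alpha\delta_{i}Z_{i}^{-\ell}}{h\phi(h)L(Z_{i})\bar G(Z_{i})}$, computes $\mathbf{E}(F_{i}^{m})$, and applies a second exponential inequality (Corollary~A.8 of \cite{ferraty2006nonparametric}) with $a^{2}=\frac{\epsilon}{h\phi(h)}$ to obtain $\mathcal{L}_{1}=\mathcal{O}\big((\psi_{\mathcal{S}_{\mathcal{F}}}(\epsilon)/(n\phi(h)))^{1/2}\big)$ directly. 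Your SLLN shortcut yields only $T_{1}=\mathcal{O}\!\big(\frac{\log n}{nh\phi(h)}\big)$, and your claim that Assumption~\ref{Hyp:U4} ``renders this negligible'' is where the gap lies: the second bullet of~\ref{Hyp:U4} gives $\frac{(\log n)^{2}}{n\phi(h)}<\psi_{\mathcal{S}_{\mathcal{F}}}(\frac{\log n}{n})$, whereas dominating your $T_{1}$ by $\sqrt{\psi_{\mathcal{S}_{\mathcal{F}}}(\frac{\log n}{n})/(n\phi(h))}$ requires $\frac{(\log n)^{2}}{n h^{2}\phi(h)}=\mathcal{O}(\psi_{\mathcal{S}_{\mathcal{F}}}(\frac{\log n}{n}))$, an extra factor $h^{-2}$ that~\ref{Hyp:U4} does not supply. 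The reason the paper's route succeeds is that the moment computation retains the localisation coming from the support of $K$ (the increment $K_{i}(\chi)-K_{i}(\chi_{k(\chi)})$ vanishes unless $\boldsymbol{\chi}_{i}\in B(\chi,h)\cup B(\chi_{k(\chi)},h)$), which restores a factor $\phi(h)$ that your SLLN argument discards. To close the gap you should keep that indicator when bounding the kernel increment and control $\frac{1}{n}\sum_{i}\mathds{1}_{B(\chi,h)\cup B(\chi_{k(\chi)},h)}(\boldsymbol{\chi}_{i})\,\frac{\alpha\delta_{i}Z_{i}^{-\ell}}{L(Z_{i})\bar G(Z_{i})}$ by a concentration (not SLLN) argument, exactly as the paper does.
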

\begin{proof}
Let $\chi_{1}$, $\chi_{2}$, $\cdots$, $\chi_{N_{\epsilon}(\mathcal{S}_{\mathcal{F}})}$ a finite set of elements in $\mathcal{F}$ where $N_{\epsilon}(\mathcal{S}_{\mathcal{F}})$ represents the minimal number of open balls in $\mathcal{F}$ of radius $\epsilon$ needed to cover $\mathcal{S}_{\mathcal{F}}$, that is
$\mathcal{S}_{\mathcal{F}}\subset \bigcup_{k=1}^{N_{\epsilon}(\mathcal{S}_{\mathcal{F}})}B(\chi_{k},\epsilon)
\text{ and we take } \epsilon=\frac{\log n}{n}.
$
Set $k(\chi)= arg\min\limits_{k\in\left\lbrace 1,2,\cdots,N_{\epsilon}(\mathcal{S}_{\mathcal{F}})\right\rbrace} d(\chi, \boldsymbol{\chi}_{k})$ for all $\chi \in\mathcal{F}$ and $K_{i}\left(\chi\right):=K\left(\frac{d(\chi, \boldsymbol{\chi}_{i})}{h}\right)$.
We have
\begin{small}
\begin{align*}
\sup_{\chi \in\mathcal{S}_{\mathcal{F}}}\mid \tilde{r}_{\ell,n}(\chi)-\mathbf{E}(\tilde{r}_{\ell,n}(\chi)) \mid&\leq
\underbrace{\sup_{\chi\in\mathcal{S}_{\mathcal{F}}}\mid\tilde{r}_{\ell,n}(\chi)-\tilde{r}_{\ell,n}(\chi_{k(\chi)})}_{\mathcal{L}_{1}}\mid +\underbrace{\sup_{\chi\in\mathcal{S}_{\mathcal{F}}}\mid\tilde{r}_{\ell,n}(\chi_{k(\chi)})-\mathbf{E}\left(\tilde{r}_{\ell,n}(\chi_{k(\chi)})\right)}_{\mathcal{L}_{2}}\mid\\
&+\underbrace{\sup_{\chi\in\mathcal{S}_{\mathcal{F}}}\mid\mathbf{E}\left(\tilde{r}_{\ell,n}(\chi_{k(\chi)})\right)-\mathbf{E}(\tilde{r}_{\ell,n}(\chi))}_{\mathcal{L}_{3}} \mid.
\end{align*}
\end{small}
In order to achieve the proof we must evaluate the convergence rate of each term $\mathcal{L}_{j}$ for $j=1,2,3$. So
\begin{small}
$$
\mathcal{L}_{1}\leq \sup_{\chi\in\mathcal{S}_{\mathcal{F}}}\frac{1}{n}\sum_{i=1}^{n}\left\vert\dfrac{\alpha\delta_{i}K_{i}(\chi)Z_{i}^{-\ell}}{\mathbb{E}\left(K_{i}(\chi)\right)L(Z_{i})\bar{G}(Z_{i})}-\dfrac{\alpha\delta_{i}K_{i}(\chi_{k(\chi)})Z_{i}^{-\ell}}{\mathbb{E}\left(K_{i}(\chi_{k(\chi)})\right)L(Z_{i})\bar{G}(Z_{i})}\right\vert
$$
\end{small}
By combining assumptions~\ref{Hyp:U1} and~\ref{Hyp:U3} we get $\forall \chi\in \mathcal{S}_{\mathcal{F}}\text{, }0<C\phi\left(h\right)<\mathbb{E}\left(K_{i}(\chi)\right)<C^{\prime}\phi\left(h\right),$
then:
\begin{small}
\begin{align*}
\mathcal{L}_{1}&\leq \sup_{\chi\in\mathcal{S}_{\mathcal{F}}}\frac{c}{n\phi(h)}\sum_{i=1}^{n}\left\vert K_{i}(\chi)-K_{i}(\chi_{k(\chi)})\right\vert\times\frac{\alpha\delta_{i}Z_{i}^{-\ell}}{L(Z_{i})\bar{G}(Z_{i})}\leq \sup_{\chi\in\mathcal{S}_{\mathcal{F}}}\frac{c}{nh\phi(h)}\sum_{i=1}^{n}\left\vert d\left(\chi, \boldsymbol{\chi}_{i}\right)-d\left(\chi_{i},\chi_{k(\chi)}\right)\right\vert\times\frac{\alpha\delta_{i}Z_{i}^{-\ell}}{L(Z_{i})\bar{G}(Z_{i})}&\\
&\leq \sup_{\chi\in\mathcal{S}_{\mathcal{F}}}\frac{c}{n}\sum_{i=1}^{n}\dfrac{\epsilon\alpha\delta_{i}Z_{i}^{-\ell}}{h\phi(h)L(Z_{i})\bar{G}(Z_{i})}=:\sup_{\chi\in\mathcal{S}_{\mathcal{F}}}\frac{c}{n}\sum_{i=1}^{n}F_{i}.&
\end{align*}
\end{small}
Next, we evaluate $\mathbf{E}\left(F_{i}^m\right)$, $m\geq2$, for applying an exponential type inequality for unbounded random variables (see Corollary A8 in \cite{ferraty2006nonparametric}). We have under assumption~\ref{Hyp:U5}
\begin{small}
\begin{align*}
\mathbf{E}\left(F_{i}^{m}\right)&=\frac{\epsilon^{m}}{h^{m}\phi(h)^{m}}\mathbf{E}\left(\frac{\alpha^{m}\delta_{i}Z_{i}^{-m\ell}}{L(Z_{i})^{m}\bar{G}(Z_{i})^{m}}\right)=\frac{\epsilon^{m}}{h^{m}\phi(h)^{m}}\mathbb{E}\left(\frac{\alpha^{m-1}Y^{-m\ell}}{L(Y)^{m-1}\bar{G}(Y)^{m-1}}\right)\leq\frac{C\epsilon^{m}}{h^{m}\phi(h)^{m}}.&
\end{align*}
\end{small}
We apply the fore-mentioned exponential inequality with \begin{small}
$a^{2}=\dfrac{\epsilon}{h\phi(h)}=\frac{\log n}{nh\phi(h)}$\end{small} and Assumption~\ref{Hyp:U4} to get\begin{small}
$\mathcal{L}_{1}=\mathcal{O}\left((\frac{\psi_{\mathcal{S}_{\mathcal{F}}}\left(\frac{\log n}{n}\right)}{n\phi(h)})^{\frac{1}{2}}\right)$
\end{small}.\\
For the term $\mathcal{L}_{2}$, and for all $\eta>0$ we have:
\begin{small}
\begin{align*}
\mathbf{P}\left(\mathcal{L}_{2}>\eta(\frac{\psi_{\mathcal{S}_{\mathcal{F}}}(\epsilon)}{n\phi(h)})^\frac{1}{2}\right)\leq N_{\epsilon}(\mathcal{S}_\mathcal{F})\max\limits_{k(\chi)\in \left\lbrace1,2,\cdots,N_{\epsilon}\right\rbrace}\mathbf{P}\left(\mid\tilde{r}_{\ell,n}(\chi_{k(\chi)})-\mathbf{E}(\tilde{r}_{\ell,n}(\chi_{k(\chi)}))\mid>\eta\sqrt{\frac{\psi_{\mathcal{S}_{\mathcal{F}}}\left(\epsilon\right)}{n\phi(h)}}\right).&
\end{align*}
\end{small}
By using the same arguments as in Lemma~\ref{Lem2} (taking $a^{2}=\phi(h)^{-m+1}$) we can show that:
\begin{small}
$$
\mathbf{P}\left(\left\vert\tilde{r}_{\ell,n}(\chi_{k(\chi)})-\mathbf{E}\left(\tilde{r}_{\ell,n}(\chi_{k(\chi)})\right)\right\vert>\eta\sqrt{\frac{\psi_{\mathcal{S}_{\mathcal{F}}}\left(\epsilon\right)}{n\phi(h)}}\right)\leq 2\exp\left(-C\eta^{2}\psi_{\mathcal{S}_\mathcal{F}}\left(\epsilon\right)\right).
$$
\end{small}
We have $\psi_{\mathcal{S}_\mathcal{F}}\left(\epsilon\right)=\log N_{\epsilon}(\mathcal{S}_\mathcal{F})$  as defined in \cite{ferraty2010rate} then:
\begin{small}
\begin{align*}
\mathbf{P}\left(\left\vert\tilde{r}_{\ell,n}(\chi_{k(\chi)})-\mathbf{E}\left(\tilde{r}_{\ell,n}(\chi_{k(\chi)})\right)\right\vert>\eta\sqrt{\frac{\psi_{\mathcal{S}_{\mathcal{F}}}\left(\epsilon\right)}{n\phi(h)}}\right)&\leq 2\exp\left(-C\eta^{2}\log  N_{\epsilon}(\mathcal{S}_\mathcal{F})\right)\leq 2N_{\epsilon}(\mathcal{S}_\mathcal{F})^{-C\eta^{2}},&
\end{align*}
\end{small}
it follows that:
\begin{small}
\begin{align*}
N_{\epsilon}(\mathcal{S}_\mathcal{F})\max\limits_{k(\chi)\in \lbrace1,2,\cdots,N_{\epsilon}\rbrace}&\mathbf{P}\left(\mid\tilde{r}_{\ell,n}(\chi_{k(\chi)})-\mathbf{E}\left(\tilde{r}_{\ell,n}(\chi_{k(\chi)})\right)\mid>\eta\sqrt{\frac{\psi_{\mathcal{S}_{\mathcal{F}}}\left(\epsilon\right)}{n\phi(h)}}\right)\leq 2N_{\epsilon}(\mathcal{S}_\mathcal{F})^{1-C\eta^{2}}.&
\end{align*}
\end{small}
It is sufficient to take $\eta=\sqrt{\frac{\gamma}{C}}$ which under assumption~\ref{Hyp:U4} and by Borel-Cantelli's lemma gives:
\begin{small}
\begin{align*}
\sum_{n=1}^{\infty}\mathbf{P}\left(\mathcal{L}_{2}>\eta\sqrt{\frac{\psi_{\mathcal{S}_{\mathcal{F}}}\left(\epsilon\right)}{n\phi(h)}}\right)&\leq2\sum_{n=1}^{\infty} N_{\epsilon}(\mathcal{S}_\mathcal{F})^{1-\gamma}<\infty \Rightarrow \mathcal{L}_{2}=\mathcal{O}\left(\sqrt{\frac{\psi_{\mathcal{S}_{\mathcal{F}}}\left(\epsilon\right)}{n\phi(h)}}\right)
\end{align*}
\end{small}
Regarding the term $\mathcal{L}_{3}$ it is obvious by Jensen's inequality that:
\begin{small}
$$\mathcal{L}_{3}\leq\mathbf{E}\left(\sup_{\chi\in\mathcal{S}_{\mathcal{F}}}\mid\tilde{r}_{\ell,n}(\chi)-\tilde{r}_{\ell,n}(\chi_{k(\chi)})\mid\right),$$
\end{small}
then the proof for the term $\mathcal{L}_{1}$ remains valid for showing that \begin{small}
$\mathcal{L}_{3}=\mathcal{O}\left(\sqrt{\dfrac{\psi_{\mathcal{S}_{\mathcal{F}}}\left(\epsilon\right)}{n\phi(h)}}\right)$
\end{small}  since $\epsilon=\frac{\log n}{n}$, this concludes the proof of Lemma~\ref{LemU2}.
\end{proof}
\begin{flushleft}
The proof of Theorem~\ref{Theo2} is now complete. $\square$
\end{flushleft}

\bibliographystyle{unsrtnat}
\bibliography{references}

\end{document}